\def\dist{\mathop{\rm dist}\nolimits}
\def\diag{\mathop{\rm diag}\nolimits}
\def\diam{\mathop{\rm diam}\nolimits}
\newtheorem{thm}{Theorem}[section]
\newtheorem{lem}{Lemma}[section]
\newtheorem{prop}[lem]{Proposition}
\newtheorem{coro}[lem]{Corollary}
\newtheorem{defn}[lem]{Definition}
\numberwithin{equation}{section}
\newtheorem{remark}{Remark}[section]
\newcommand{\bi}{{\mathbf{i}}}
\newcommand{\bj}{{\mathbf{j}}}
\newcommand{\bD}{{\mathcal{D}}}
\newcommand{\SD}{{\mathcal{D}}}
\newcommand{\R}{{\mathbb{R}}}
\newcommand{\bomega}{{\boldsymbol {\omega}}}
\begin{document}
\title{Relations between topological and metrical properties of self-affine Sierpi$\acute{\text{N}}$ski sponges}

\author{Yuan Zhang} \address{Department of Mathematics and Statistics, Central China Normal University, Wuhan, 430079, China}
\email{yzhang@mail.ccnu.edu.cn}

\author{Liang-yi Huang$^*$ } \address{College of Mathematics and Statistics, Chongqing  University, Chongqing, 401331, China}
\email{liangyihuang@cqu.edu.cn}

\date{November 4, 2021}

\thanks {The work is supported by NSFS Nos. 11971195 and 11601172.}

\thanks{{\bf 2000 Mathematics Subject Classification:}  28A80.\\
 {\indent\bf Key words and phrases:}\  Self-affine Sierpi$\acute{\text{n}}$ski sponge, $\delta$-connected component, gap sequence.}
\thanks{* The correspondence author.}

\maketitle

\begin{abstract}We investigate two Lipschitz invariants of metric spaces defined by $\delta$-connected components, called
the maximal power law property and the perfectly disconnectedness.
The first  property has been studied in  literature for some self-similar sets and Bedford-McMullen carpets,
while the second property seems to be new.
 For a self-affine Sierpi$\acute{\text{n}}$ski sponge $E$, we first show that $E$ satisfies the maximal power law if and only if $E$ and all its
major projections contain trivial connected components; secondly, we show that $E$ is perfectly disconnected if and only if
$E$ and all its major projections are totally disconnected.
\end{abstract}

\section{\textbf{Introduction}}
Let $d\geq 2$ and let
$ 2\leq n_1<n_2<\dots<n_d$ be a sequence of  integers. Let $\Lambda=\diag(n_1,\dots, n_d)$ be the $d\times d$ diagonal matrix. Let $\bD=\{\bi_1,\dots,\bi_N\}\subset\prod_{j=1}^d\{0,1,\dots,n_j-1\}$.
  For any $\bi\in\bD$ and $z\in\mathbb{R}^d$, we define $S_{\bi}(z)=\Lambda^{-1}(z+\bi)$, then $\{S_{\bi}\}_{\bi\in\bD}$ is an \emph{iterated function system} (IFS). The unique non-empty compact set $E=K(\{n_j\}_{j=1}^d,\bD)$ satisfying
\begin{equation}\label{Ssponge}
E=\bigcup_{\bi\in\bD}S_{\bi}(E)
\end{equation}
is called a $d$-dimensional \emph{self-affine Sierpi$\acute{\text{n}}$ski sponge}, see Kenyon and Peres \cite{KP96} and Olsen \cite{Olsen07}.
In particular, if $d=2$, then $E$ is called a \emph{Bedford-McMullen carpet}.

  There are a lot of works on dimensions, multifractal analysis
and other topics of    self-affine Sierpi$\acute{\text{n}}$ski sponges,  see for instance,
McMullen \cite{McMullen84}, Bedford \cite{Bedford84},   King \cite{King95},
Kenyon and Peres \cite{KP96},
  Olsen \cite{Olsen07}, Barral and Mensi  \cite{Bar07},
Jordan and Rams \cite{JR11}, Mackay \cite{MM11}, Fraser and Howroyd \cite{Fraser2017}.
Recently there are some works devoted to the Lipschitz classification of Bedford-McMullen carpets
(\cite{LLM13, MXX17,Rao2019,YZ20a}). The goal of the present paper is to search new Lipschitz invariants of self-affine
Sierpi$\acute{\text{n}}$ski sponges,  and investigate their relations with the topological properties
of the major projections of such sponges.

Let $(E, \rho)$ be a metric space. Let $\delta>0$. Two points $x,y\in E$ are said to be \emph{$\delta$-equivalent} if there exists a sequence $\{x_1=x,x_2,\dots,x_{k-1},x_k=y\}\subset E$ such that $\rho(x_{i},x_{i+1})\le\delta$ for $1\le i\le k-1$.
A $\delta$-equivalent class of $E$ is called a \emph{$\delta$-connected component} of $E$.
We denote by $h_E(\delta)$ the cardinality of the set of $\delta$-connected components of $E$.

Two positive sequences $\{a_i\}_{i\ge 1}$ and $\{b_i\}_{i\ge 1}$ are said to be \emph{comparable}, and denoted by $a_i\asymp b_i$, if there exists a constant $c>1$ such that $c^{-1}\le b_i/a_i\le c$ for all $i\geq 1$.
We define the maximal power law property as following.

\begin{defn}[Maximal power law]
\emph{Let $(E, \rho)$ be a compact metric space.
Let $\gamma>0$. We say $E$   satisfies the \emph{power law} with \emph{index} $\gamma$ if  $h_E(\delta)\asymp\delta^{-\gamma}$ ; if $\gamma=\dim_BE$ in addition, we say $E$ satisfies the \emph{maximal power law}.}
\end{defn}

The above definition is motivated by the notion of gap sequence.
 Gap sequence of a set on $\R$ was widely used by many mathematicians, for instance, \cite{BT54,K95,LP93}.
Using the function $h_E(\delta)$,  Rao, Ruan and Yang  \cite{RRY08} generalized the notion of gap sequence to
$E\subset \R^d$, denoted by $\{g_i(E)\}_{i\geq 1}$. It is shown in \cite{RRY08} that   if two compact subsets $E,E'\subset\mathbb{R}^d$ are Lipschitz equivalent,
then $g_i(E)\asymp g_i(E')$.
Actually, the definition and result in \cite{RRY08} are also valid for metric spaces, see  Section 2.

Miao, Xi and Xiong \cite{MXX17} observed that  $E$ satisfies the power law with index $\gamma$ if and only if   $g_i(E)\asymp i^{-1/\gamma}$ (see Lemma \ref{MXX}).
Consequently,  the (maximal) power law property is invariant under bi-Lipschitz maps.

Deng, Wang and Xi \cite{DWX15} proved that if $E\subset\mathbb{R}^d$ is a $C^{1+\alpha}$ conformal set satisfying the strong separation condition, then $E$  satisfies the maximal power law.
Miao \emph{et al.}  \cite{MXX17}  proved that a totally disconnected Bedford-McMullen carpet  satisfies the maximal power law  if and only if it possesses vacant rows.
 Liang, Miao and Ruan \cite{LMR2020} completely characterized the gap sequences of Bedford-McMullen carpets.
 For higher dimensional fractal cubes (see Section 3 for the definition), we show that

\begin{thm}\label{thm:cube} A fractal cube    satisfies the maximal power law if and only if  it has trivial points.
\end{thm}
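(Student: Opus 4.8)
Throughout I use that a fractal cube $E$ with ratio $1/n$ and $N=\#\bD$ maps has $\dim_B E=\log N/\log n=:s$, since the $N^k$ level-$k$ cylinders occupy $N^k$ distinct mesh cubes of side $n^{-k}$. First I record the universal bound $h_E(\delta)\lesssim\delta^{-s}$: picking one point from each $\delta$-connected component yields a $\delta$-separated subset of $E$, so $h_E(\delta)\le N_\delta(E)\asymp\delta^{-s}$, where $N_\delta(\cdot)$ is the $\delta$-covering number. Hence $E$ has the maximal power law iff $h_E(\delta)\gtrsim\delta^{-s}$, and everything reduces to matching this lower bound with the existence of a trivial point. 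I also use two facts, valid since $E$ is compact and self-similar: (a) a point $x$ is a trivial connected component of $E$ iff $\diam$ of its $\delta$-connected component tends to $0$ as $\delta\to0$ (equivalently, $x$ has arbitrarily small clopen neighbourhoods); (b) every clopen subset of $E$ --- in particular every $\delta$-connected component --- is a finite union of level-$k$ cylinders whenever $c_0n^{-k}$ is less than the distance from that clopen set to its complement, $c_0:=\diam E$.

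\emph{Trivial point $\Rightarrow$ maximal power law.} Let $x_0$ be a trivial point and, using (a)--(b), let $K$ be the $\delta_0$-connected component of $x_0$ for a small $\delta_0$; then $K\subsetneq E$ is a finite union of level-$p$ cylinders, $\rho_0:=\dist(K,E\setminus K)>\delta_0$, and $K$ is $\delta_0$-connected. For $\bj\in\bD^m$ set $\delta:=n^{-m}\delta_0$. Then $S_{\bj}(K)$ is $\delta$-connected and $\dist\big(S_{\bj}(K),\,S_{\bj}(E)\setminus S_{\bj}(K)\big)=n^{-m}\rho_0>\delta$, so $S_{\bj}(K)$ is its own $\delta$-connected component of $E$ as soon as no other level-$m$ cylinder meets $S_{\bj}(E)$ within distance $\delta$ of $S_{\bj}(K)$; distinct such words $\bj$ then give distinct components, whence $h_E(\delta)\ge\#\{\text{good }\bj\}$. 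If $x_0$ lies interior to $[0,1]^d$ one shrinks $K$ so that every $S_{\bj}(K)$ stays interior to the mesh cube of $\bj$, making every $\bj$ good and giving $h_E(\delta)\ge N^m\asymp\delta^{-s}$. The obstacle is the case where every trivial point sits on $\partial[0,1]^d$: one must then show that at most a vanishing proportion of $\bj\in\bD^m$ possess a selected neighbour across the relevant facet of their mesh cube, which is governed by the face structure of $E$ --- this is the point where, for general Sierpi\'nski sponges, the hypotheses on the major projections enter, and where the real work lies.

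\emph{Maximal power law $\Rightarrow$ trivial point.} From $h_E(\delta)\asymp\delta^{-s}$ together with $N_\delta(E)\asymp\sum_{\mathcal C}N_\delta(\mathcal C)\asymp\delta^{-s}$ (sum over $\delta$-connected components $\mathcal C$, each term $\ge1$) one gets that the averaged covering number $N_\delta(E)/h_E(\delta)$ stays bounded, so "most" $\delta$-components have diameter $\asymp\delta$. A localised form of this inequality --- inside a given $\delta$-component, which is a clopen union of cylinders and so inherits a comparable lower bound on its own component count from the self-similarity of its constituent cylinders --- produces a nested sequence $\mathcal C_1\supsetneq\mathcal C_2\supsetneq\cdots$ with $\mathcal C_j$ a $\delta_j$-connected component, $\delta_j\to0$ and $\diam\mathcal C_j\to0$; since each $\mathcal C_j$ is a $\delta_j$-component of $E$, the point $x_*:=\bigcap_j\mathcal C_j$ has connected component contained in $\bigcap_j\mathcal C_j=\{x_*\}$, hence trivial. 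I expect the two self-similar propagation estimates --- the boundary/touching count in the forward direction, and the uniform localised counting inequality making the descent to a single point possible in the converse --- to be the genuinely hard steps; the isotropy of a fractal cube is what lets both be phrased through the single exponent $N^m$, whereas for self-affine sponges the anisotropy forces the major projections into the picture.
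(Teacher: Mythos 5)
Both directions of your argument stop exactly at the steps that carry the actual content, so this is a plan rather than a proof. In the forward direction, your copy-propagation of the clopen neighbourhood $K$ of a trivial point works only when all the copies $S_{\bj}(K)$ stay away from the faces of their mesh cubes; you explicitly leave open the case where the trivial points (hence $K$) meet $\partial[0,1]^d$, and the proposed fix (``only a vanishing proportion of bad words $\bj$'') is neither proved nor obviously true. The paper bypasses this entirely by invoking the island criterion of Huang--Rao (Proposition \ref{hasisland}): $F$ has a trivial point if and only if some approximation $F_p$ contains an island $C$, i.e.\ a connected component of $F_p$ disjoint from $\partial[0,1]^d$. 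Since $C$ is a union of $p$-th level cubes at distance at least $m^{-p}$ from $\partial[0,1]^d$, every copy $\varphi_{\boldsymbol{\sigma}}(C)$, $\boldsymbol{\sigma}\in\bD_F^{k-p-1}$, is a $(k-1)$-th island and any two such copies are at distance at least $m^{1-k}>m^{-k}$; thus \emph{every} word is ``good'' and $h_F(m^{-k})\geq r^{k-p-1}$ with no exceptional set to control. Translating a trivial point into an island at a finite level is precisely the nontrivial input you are missing.

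In the converse direction, the ``localised counting inequality'' you appeal to is the whole difficulty: inside a $\delta_1$-component $\mathcal C_1$, the $\delta'$-components of a constituent cylinder $W$ may merge with one another through neighbouring cylinders of $\mathcal C_1$, so the bound $h_W(\delta')\geq N^{-k_1}h_E(\delta')$ does not transfer to a lower bound on the number of $\delta'$-components of $E$ lying in $\mathcal C_1$; controlling such boundary mergers is exactly what the paper must do (for sponges) in Lemma \ref{lem:key}, via the measure estimate of Corollary \ref{cor:upper}, and you give no substitute. For fractal cubes the paper's converse avoids localisation altogether: if $F$ has no trivial point then by Proposition \ref{hasisland} no $F_k$ has an island, so every connected component of $F_k$ contains a $k$-th boundary cube, and (using non-degeneracy, $r_i\leq r-1$, with an induction on $d$ and a separate reduction for the degenerate case) there are at most $2d(r-1)^{k}$ such cubes; hence $h_F(m^{-k})\leq 2d(r-1)^{k+q}$, an exponent $\log_m(r-1)$ strictly below $\dim_BF=\log_m r$, contradicting the maximal power law. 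Your nested-components scheme ($\mathcal C_1\supsetneq\mathcal C_2\supsetneq\cdots$ with $\diam\mathcal C_j\to 0$, whose intersection would indeed be a trivial point) is a reasonable alternative strategy, but as written its key inheritance step is unproved, so the argument has a genuine gap in both directions.
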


Let $E$ be a self-affine Sierpi$\acute{\text{n}}$ski sponge defined in \eqref{Ssponge}.
A point $z\in E$ is called \emph{a trivial point} of $E$ if $\{z\}$ is a connected component of $E$.
For $x=(x_1,\dots, x_d)\in E$ and $1\leq j\leq d-1$, we set
$$\pi_j(x)=(x_{1},\dots, x_j).$$
We call $\pi_j(E)$ the $j$-th major projection of $E$.
We say $E$ is \emph{degenerated} if $E$ is contained in a face of $[0,1]^d$ with dimension $d-1$.

\begin{thm}\label{thm:law} Let $E$ be a non-degenerated self-affine Sierpi$\acute{\text{n}}$ski sponge defined in \eqref{Ssponge}. Then $E$
satisfies the maximal power law if and only if  $E$ and all $\pi_j(E)(1\le j\le d-1)$ possess trivial points.
\end{thm}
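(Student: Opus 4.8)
\emph{The plan.} I will induct on the dimension $d$. Since $E':=\pi_{d-1}(E)$ is itself a self-affine Sierpi\'nski sponge in $\R^{d-1}$ whose major projections are exactly $\pi_1(E),\dots,\pi_{d-2}(E)$, the inductive hypothesis says that $E'$ satisfies the maximal power law if and only if $\pi_j(E)$ has a trivial point for every $1\le j\le d-1$. Hence Theorem~\ref{thm:law} reduces to the one-step assertion
\[
E\text{ satisfies the maximal power law}\iff E\text{ has a trivial point and }E'\text{ satisfies the maximal power law.}
\]
The base case $d=1$ is the elementary fact that a self-similar subset $E\subseteq[0,1]$ with a uniform contraction ratio satisfies the maximal power law iff $E\neq[0,1]$ iff $E$ has a trivial point: if $E\neq[0,1]$ then $E$ has a genuine gap of some width $w>0$ at the first level, hence $\asymp N^{\ell}$ gaps of width $>\delta$ for $\ell$ up to $k-O(1)$ when $\delta=n_1^{-k}$, which combined with $h_E(\delta)\le\#\{\text{level-}k\text{ cylinders}\}=N^k$ gives $h_E(\delta)\asymp N^k\asymp\delta^{-\dim_BE}$; the same gap also exhibits a trivial point (e.g. the supremum of the last nonempty cylinder), while $E=[0,1]$ is connected with $h_E\equiv1$. (A degenerate $E'$ is handled by dropping its trivial coordinate and invoking a lower-dimensional instance.) For the inductive step I first set up the approximate-square decomposition at scale $\delta=n_d^{-k}$: refine the slow directions of the rectangular level-$k$ cylinders until all side lengths are $\asymp\delta$. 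Writing $M_k(E)$ for the number of approximate squares, the Kenyon--Peres box-dimension formula gives $M_k(E)\asymp\delta^{-\dim_BE}$; since two adjacent approximate squares share boundary points they lie in one $\delta$-connected component, so $h_E(\delta)\le M_k(E)$ and therefore $h_E(\delta)\lesssim\delta^{-\dim_BE}$ for all $\delta$. Thus the maximal power law is equivalent to the reverse bound $h_E(\delta)\gtrsim\delta^{-\dim_BE}$. I also record that $\pi_{d-1}\colon E\to E'$ is a $1$-Lipschitz surjection carrying $\delta$-components onto $\delta$-components (so $h_E(\delta)\ge h_{E'}(\delta)$) and approximate squares onto approximate squares, and that $M_k(E)\asymp(N/N')^kM_k(E')$, where $N=|\bD|$ and $N'=|\pi_{d-1}(\bD)|$, because the last coordinate contributes $\log(N/N')/\log n_d$ to $\dim_BE$.

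\emph{Sufficiency.} Assume $E'$ is maximal and $E$ has a trivial point $z$. Since $\pi_{d-1}^{-1}(\pi_{d-1}(z))\cap E\subseteq E$, the singleton $\{z\}$ is also a component of this fibre, a Moran-type subset of a line; compactness yields, for every $\epsilon$, a clopen neighbourhood of $z$ of diameter $<\epsilon$, and pushing these forward through the IFS produces separations of definite relative size in every cylinder of $E$. One uses them to split, inside (almost) every approximate square $Q'$ of $E'$, the column of $E$ above $Q'$ into $\gtrsim(N/N')^k$ pairwise $\delta$-separated pieces. As $E'$ is maximal there are $\asymp M_k(E')$ $\delta$-components of $E'$, and distinct $\delta$-components of $E'$ lift to distinct $\delta$-components of $E$; summing gives $h_E(\delta)\gtrsim(N/N')^kM_k(E')\asymp M_k(E)\asymp\delta^{-\dim_BE}$. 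The delicate point is the uniformity of the $(N/N')^k$ lower bound across the various $\delta$-components of $E'$, which must be controlled through the anisotropic approximate-square bookkeeping rather than a naive fibrewise count.

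\emph{Necessity.} I argue the contrapositive in two cases. \emph{Case 1: $E$ has no trivial point.} Then $E$ has a connected component $C$ with $\diam C>0$, so by self-affinity $S_{\bomega}(C)\subseteq E$ is connected with $\diam S_{\bomega}(C)\gtrsim n_d^{-|\bomega|}$ inside every cylinder $S_{\bomega}(E)$; being connected it lies in a single $\delta$-component, forcing a merge of approximate squares inside every cylinder down to a bounded relative scale. Iterating this merge through the nested cylinders at levels $\ell=1,\dots,k-O(1)$ yields a multiplicative deficit $h_E(\delta)\le M_k(E)(1-c)^{\Theta(k)}=o(\delta^{-\dim_BE})$; here non-degeneracy is used to guarantee $\dim_BE>\log N/\log n_d$, so the merged pieces are not merely of lower order. \emph{Case 2: $E'$ is not maximal.} Then there are arbitrarily small $\delta$ with $h_{E'}(\delta)=o(M_k(E'))$, and since the part of $E$ lying over a $\delta$-component of $E'$ cannot break into the maximal number of $\delta$-components of $E$, this deficit propagates upward to $h_E(\delta)=o((N/N')^kM_k(E'))=o(\delta^{-\dim_BE})$.

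\emph{The main obstacle} is the necessity direction, precisely its two quantitative steps. In Case 1 one must turn ``$E$ has a positive-diameter component'' into the compounding merge estimate while controlling how approximate squares of $E$ sit inside a cylinder: because the $n_j$ are distinct, the refinement levels do not simply subtract and $S_{\bomega}(E)$ is not a clean rescaled copy of $E$ for approximate-square purposes. In Case 2 one must propagate the $\delta$-component deficit of $E'$ through the fibration $E\to E'$, which again requires a careful column-by-column comparison rather than the crude identity $h_E(\delta)=\sum_{C'}h_{\pi_{d-1}^{-1}(C')\cap E}(\delta)\le M_k(E)$. By contrast, the sufficiency direction and the base case are comparatively routine once the approximate-square machinery is in place.
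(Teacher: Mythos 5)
Your top-level skeleton (induction on $d$ through $G:=\pi_{d-1}(E)$, base case $d=1$, and the upper bound $h_E(\delta)\lesssim\delta^{-\dim_BE}$ from counting approximate boxes) coincides with the paper's, but both substantive directions are left as unproved mechanisms, and the mechanisms you propose do not work as stated. For sufficiency, your plan is to split the column of $E$ above (almost) every approximate square of $E'$ into $\gtrsim(N/N')^k$ pairwise $\delta$-separated pieces by pushing a small clopen neighbourhood of a trivial point through the IFS. Two problems: separation of $S_{\bomega}(U)$ from the rest of \emph{its own cylinder} says nothing about its distance to points of $E$ in neighbouring cylinders, which can come arbitrarily close; and the columns genuinely need not split at all (e.g.\ a carpet with one full vertical column of digits: the fibres over the corresponding approximate squares contain segments, yet $E$ can still possess trivial points). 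The paper's substitute for this step is the island criterion of Zhang--Xu (Proposition \ref{has-island2}): a trivial point forces some $E_{q_0}$ to contain an island $I$, the $N^{k-q_0}$ copies $S_{\boldsymbol{\tau}}(I)$ are components of $E_k$ that are pairwise $n_d^{-k}$-separated, and inside each such island one counts \emph{inner} $\delta$-components of a projected cylinder of $G$ via Lemma \ref{lem:key}(ii). You have no device playing the role of islands, and this is exactly the point you yourself flag as ``delicate'' without resolving it.

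For necessity, your Case 1 bound $h_E(\delta)\le M_k(E)(1-c)^{\Theta(k)}$ is asserted rather than proved, and it is far from clear that the merges caused by copies of a fixed positive-diameter component at different levels compound multiplicatively (they are nested and overlapping); the paper's route is different and short: no trivial points $\Rightarrow$ no islands $\Rightarrow$ every $\delta$-component meets $\partial[0,1]^d$, so $h_E(\delta)=h^b_E(\delta)=o(\delta^{-\dim_BE})$ by a measure estimate on boundary cylinders that is where non-degeneracy enters (Lemma \ref{lem:key}(i)). Your Case 2 is the crux and you essentially concede it: from $h_G(\delta)\le\varepsilon\delta^{-\dim_BG}$ the deficit does not simply ``propagate upward''. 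One needs a per-cylinder \emph{upper} bound $h_V(\delta)\le 2\varepsilon\delta^{-\dim_BG}/(N')^k$, which does not follow from subadditivity; the paper gets it by splitting the $\delta$-components of a cylinder $V$ of $G$ into boundary ones (few, by the same measure estimate) and inner ones (which remain distinct components of $G$, and since all $k$-th cylinders are translates of each other, $(N')^kh^i_V(\delta)\le h_G(\delta)$), then lifting via $h_W(\sqrt{2}\,n_d^{-k})\le h_V(\delta)$ and summing over the $N^k$ cylinders of $E$. Without the boundary/inner decomposition and the Kenyon--Peres counting behind Corollary \ref{cor:upper}, both directions remain open in your write-up, so the proposal has genuine gaps precisely at the two key steps.
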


 The following definition characterizes a class of fractals that all $\delta$-connected components
 are small. Let  $\text{diam}\, U$ denote  the diameter of a set $U$.

\begin{defn}[Perfectly disconnectedness]
\emph{Let $(E,\rho)$ be a compact metric space. We say $E$ is \emph{perfectly disconnected},
if there is a constant $M_0>0$ such that for any $\delta$-connected component $U$ of $E$ with $0<\delta<\text{diam}(E)$,
  $\text{diam}\,U\leq M_0\delta$. }
\end{defn}

It is clear that  perfectly disconnectedness implies totally disconnectedness,
and the perfectly disconnectedness property is invariant under bi-Lipschitz maps.

\begin{remark}\label{perfectly_prop}
\emph{
 It is essentially shown in Xi and Xiong \cite{XX10} that
  a fractal cube is perfectly disconnected if and only if it is totally disconnected.
  We guess this may be true for a large class of self-similar sets.}
\end{remark}

\begin{thm}\label{thm:perfect}
Let $E$ be a non-degenerated self-affine Sierpi$\acute{\text{n}}$ski sponge defined in \eqref{Ssponge}. Then $E$ is
 perfectly disconnected   if and only if $E$ and all $\pi_j(E)(1\le j\le d-1)$  are totally
 disconnected.
\end{thm}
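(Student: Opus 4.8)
The plan is to establish the two implications separately; the substance lies in the ``if'' direction.

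\emph{Necessity.} We prove the contrapositive. If $E$ is not totally disconnected, it is not perfectly disconnected, since perfect disconnectedness implies total disconnectedness. So assume $E$ is totally disconnected but $\pi_j(E)$ is not, for some $1\le j\le d-1$, and choose such a $j$ minimal. Note that $\pi_j(E)$ is itself a self-affine Sierpi\'nski sponge, with digit set $\pi_j(\bD)$ and matrix $\diag(n_1,\dots,n_j)$, and that $\pi_{j-1}\colon\pi_j(E)\to\pi_{j-1}(E)$ is $1$-Lipschitz with $\pi_{j-1}(E)$ totally disconnected when $j\ge 2$. Hence a nontrivial connected component $C$ of $\pi_j(E)$ is carried by $\pi_{j-1}$ to a single point, so $C$ is a nondegenerate closed interval parallel to the $j$-th axis (for $j=1$ this is immediate). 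Fixing a scale $n$, I would cover $C$ by the chain of level-$n$ cylinders of $\pi_j(E)$ meeting it and lift it to a chain of level-$n$ cylinders of $E$, choosing the lifts so that consecutive cylinders have codings agreeing on a long common prefix, whence their coordinates $j+1,\dots,d$ vary by only $O(n_1^{-n})$ along the chain. Since each level-$n$ cylinder of $E$ has diameter $\asymp n_1^{-n}$, the union of the lifted chain lies in one $\delta_n$-connected component of $E$ with $\delta_n\asymp n_1^{-n}$ and diameter at least $\diam\,C$; letting $n\to\infty$ shows $E$ is not perfectly disconnected. Making the choice of lifts precise, i.e.\ controlling the ``fast'' coordinates while chaining along the ``slow'' $j$-th one, is the technical heart of this direction.

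\emph{Sufficiency.} I would argue by induction on $d$. For $d=1$, $E\subseteq\R$ is a totally disconnected $n_1$-adic self-similar set, hence perfectly disconnected by the one-dimensional case of Remark \ref{perfectly_prop}. Assume the statement in dimension $d-1$. If $E$ in dimension $d$ is such that $E$ and all $\pi_j(E)$ are totally disconnected, then $\pi_{d-1}(E)$ is a non-degenerated $(d-1)$-dimensional sponge whose major projections are $\pi_1(E),\dots,\pi_{d-2}(E)$, all totally disconnected, so by induction $\pi_{d-1}(E)$ is perfectly disconnected, say with constant $M_1$. It remains to pass from $\pi_{d-1}(E)$ to $E$. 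The new ingredient is a quantitative form of the total disconnectedness of $E$: there are a level $m$ and $\varepsilon_0>0$ such that, in the level-$m$ subdivision, any two level-$m$ cylinders of $E$ lying over a common level-$m$ cylinder of $\pi_{d-1}(E)$ that cannot be joined inside it are separated in the $d$-th coordinate by at least $\varepsilon_0\,n_d^{-m}$ --- were this to fail at every level, one could iterate to build a nondegenerate connected subset of $E$. Now let $U$ be a $\delta$-connected component of $E$ with $0<\delta<\diam\,E$. Since $\pi_{d-1}$ is $1$-Lipschitz, $\pi_{d-1}(U)$ lies in a single $\delta$-connected component of $\pi_{d-1}(E)$, of diameter $\le M_1\delta$, so $U$ is confined to a ``vertical slab'' of horizontal width $O(\delta)$. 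Working at the level $\ell$ with $n_1^{-\ell}\asymp\delta$ and invoking the separation above (with $m$ replaced by a suitable multiple measured against $\ell$), any $\delta$-chain inside this slab can advance only a bounded number of level-$(\ell-km)$ cylinders in the $d$-th direction before being blocked, so $\diam\,U\le M_0\delta$ for a constant $M_0$ depending only on $E$.

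\emph{The main obstacle.} The crucial difficulty, and the reason all the major projections appear in the statement, is that the sponge is self-affine but not self-conformal: $S_{\mathbf w}$ contracts the $d$ coordinates at the pairwise incomparable rates $n_1^{-|\mathbf w|},\dots,n_d^{-|\mathbf w|}$, so one cannot rescale a small $\delta$-connected component to unit size by a similarity and take a Hausdorff limit. This forces the coordinate-by-coordinate induction above and makes both directions delicate: in the necessity part, keeping the fast coordinates $j+1,\dots,d$ under control while chaining along the slow $j$-th coordinate; and in the sufficiency part, upgrading the merely qualitative total disconnectedness of $E$ to a scale-invariant separation estimate that meshes with the already quantitative perfect disconnectedness of $\pi_{d-1}(E)$.
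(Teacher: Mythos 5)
Your global architecture (induction on $d$ through the projection $\pi_{d-1}$, exploiting the anisotropy of the contraction rates) is the same as the paper's, but both steps you yourself flag as the ``technical hearts'' contain genuine gaps, and in the necessity direction the specific mechanism you propose would fail. Reducing to a nondegenerate interval $C$ parallel to the $j$-th axis is fine, but the lifting step is not: adjacent level-$n$ cylinders of $\pi_j(E)$ along $C$ need not have codings sharing any common prefix (exactly as the base-$m$ cylinders on either side of a point like $1/2$ share none), and, more fundamentally, the fibers $\pi_j^{-1}(u)\cap E$ over two base points at distance $n_j^{-n}$ can be forced by the digit set $\bD$ into entirely different regions of the cube in coordinates $j+1,\dots,d$. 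So consecutive lifts cannot in general be kept $O(n_1^{-n})$-close, and your lifted sequence need not be a chain with small step at all. The missing idea --- which is how the paper treats this direction --- is to rescale first by self-affinity: push $C$ by a level-$k$ map of the IFS of $\pi_j(E)$ into a single level-$k$ pillar, pick one level-$k$ cylinder $W$ of $E$ projecting onto it, and lift an $n_{j+1}^{-k}$-chain of the scaled copy into $W$; inside a single cylinder the coordinates $j+1,\dots,d$ automatically vary by at most $n_{j+1}^{-k}$, and the lifted chain has size of order $(n_{j+1}/n_j)^k\to\infty$, so $E$ is not perfectly disconnected. Without this rescaling the unit-scale chain-lifting is simply unavailable.

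In the sufficiency direction, confining $U$ to a slab of horizontal width $O(\delta)$ via the induction hypothesis is sound, but the vertical control is where the proof lives and your sketch does not supply it. The separation statement you posit is unproven (the parenthetical ``iterate to build a connected subset'' is the entire content of a compactness argument, comparable to the paper's proof of its finite-type theorem), and as formulated it does not suffice: a gap of size $\varepsilon_0 n_d^{-m}$ does not block a $\delta$-jump unless scales are matched ($\varepsilon_0$ may be small, and the claim only compares cylinders over a \emph{common} base cylinder, while a chain inside the slab can climb by crossing between cylinders over different, adjacent base cylinders); moreover your working level $n_1^{-\ell}\asymp\delta$ makes the $d$-th-coordinate cells of size $n_d^{-\ell}\ll\delta$, a scale at which no such separation can stop a $\delta$-chain. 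What is actually needed is the paper's Theorem 5.1: if $E$ is totally disconnected, every connected component of $E_k$ contains at most $M_3$ level-$k$ pillars (proved by a Hausdorff-limit argument); applied at the level $k$ with $n_d^{-k}\asymp\delta$ this bounds the oscillation of the $d$-th coordinate over the component of $E_k$ containing $U$ by $M_3 n_d^{-k}$, and a Pythagorean comparison with the perfectly disconnected $\pi_{d-1}(E)$ then bounds $\diam U/\delta$. You would have to prove a statement of this strength to close your argument; as written, both directions have real gaps.
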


As a consequence of Theorem \ref{thm:law} and Theorem \ref{thm:perfect},
we obtain

\begin{coro}\label{Lip1}
Suppose $E$ and $E'$ are two non-degenerated  self-affine Sierpi$\acute{\text{n}}$ski sponges
in ${\mathbb R}^d$. If $E$ and $E'$ are Lipschitz equivalent, then
   \\
\indent\emph{(i)} if $E$ and all $\pi_j(E)$,  $1\leq j\leq d-1$ possess  trivial points, then so do $E'$ and $\pi_j(E')$,
  $1\le j\le d-1$;\\
\indent\emph{(ii)} if $E$ and all $\pi_j(E)$, $1\leq j\leq d-1$ are totally disconnected, so are $E'$ and
  $\pi_j(E')$, $1\leq j\leq d-1$.
\end{coro}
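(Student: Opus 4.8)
The plan is to derive Corollary \ref{Lip1} directly from Theorem \ref{thm:law} and Theorem \ref{thm:perfect}, together with the fact --- noted right after the respective definitions --- that the maximal power law property and the perfectly disconnectedness property are both invariant under bi-Lipschitz maps. The idea is simply to transport the relevant \emph{metric} property from $E$ to $E'$ along the bi-Lipschitz homeomorphism, and then to read off the \emph{topological} conclusion for $E'$ from the same theorem, now applied to $E'$. This is legitimate because $E'$ is assumed to be a non-degenerated self-affine Sierpi$\acute{\text{n}}$ski sponge in $\R^d$, so both theorems apply to it verbatim.

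For part (i): assume $E$ and all $\pi_j(E)$, $1\le j\le d-1$, possess trivial points. By Theorem \ref{thm:law}, $E$ satisfies the maximal power law, i.e.\ $h_E(\delta)\asymp\delta^{-\dim_B E}$. By Lemma \ref{MXX} this means $g_i(E)\asymp i^{-1/\dim_B E}$, and since the gap sequence is a bi-Lipschitz invariant up to comparability (the result of \cite{RRY08} recalled in Section 2), $g_i(E')\asymp g_i(E)\asymp i^{-1/\dim_B E}$. Because box dimension is a bi-Lipschitz invariant, $\dim_B E'=\dim_B E$, so $g_i(E')\asymp i^{-1/\dim_B E'}$; invoking Lemma \ref{MXX} in the other direction, $E'$ satisfies the power law with index $\dim_B E'$, that is, the maximal power law. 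Applying Theorem \ref{thm:law} to the non-degenerated sponge $E'$ then gives that $E'$ and all $\pi_j(E')$, $1\le j\le d-1$, possess trivial points.

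For part (ii): assume $E$ and all $\pi_j(E)$, $1\le j\le d-1$, are totally disconnected. By Theorem \ref{thm:perfect}, $E$ is perfectly disconnected; since perfect disconnectedness is invariant under bi-Lipschitz maps and $E$ is Lipschitz equivalent to $E'$, the sponge $E'$ is also perfectly disconnected. Applying Theorem \ref{thm:perfect} to $E'$ yields that $E'$ and all $\pi_j(E')$, $1\le j\le d-1$, are totally disconnected. The argument involves no genuine obstacle: the only points that require attention are that the two structural theorems are being used in both directions (so the hypothesis that $E'$ is itself a non-degenerated self-affine sponge is essential), and that in part (i) one must keep track of the box dimension through the equivalence so that the power law transported to $E'$ is again the \emph{maximal} one. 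No further computation is needed.
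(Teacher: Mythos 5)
Your proposal is correct and is exactly the argument the paper intends: the corollary is stated as an immediate consequence of Theorems \ref{thm:law} and \ref{thm:perfect}, using the bi-Lipschitz invariance of the maximal power law (via Lemma \ref{MXX} and the gap-sequence invariance of \cite{RRY08}) and of perfect disconnectedness, then applying each theorem in the reverse direction to $E'$. Your added remark about tracking the box dimension so the transported power law is again the maximal one is a worthwhile point the paper leaves implicit, but it does not change the route.
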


This paper is organized as follows: In Section 2, we give some basic facts about gap sequences of metric spaces.
Then we prove Theorem \ref{thm:cube}, Theorem \ref{thm:law} and Theroem \ref{thm:perfect} in Section 3, 4, 5 respectively.

\section{\textbf{Gap sequences of metric spaces}}
Let $(E,\rho)$ be a metric space. Recall that $h_E(\delta)$ is the cardinality of the set of $\delta$-connected components of $E$.
It is clear that $h_E:(0,+\infty)\rightarrow\mathbb{Z}_{\geq 1}$ is non-increasing. Let $\{\delta_k\}_{k\ge 1}$ be the set of discontinuous points of $h_E$ in decreasing order. Then $h_E(\delta)=1$ on $[\delta_1,\infty)$, and is constant on $[\delta_{k+1},\delta_k)$ for $k\ge 1$.
 We call $m_k=h_E(\delta_{k+1})-h_E(\delta_k)$ the \emph{multiplicity} of $\delta_k$ and define the \emph{gap sequence} of $E$, denoted by $\{g_i(E)\}_{i\ge 1}$, to be the sequence
$$
\underbrace{\delta_1,\dots,\delta_1}_{m_1},\underbrace{\delta_2,\dots,\delta_2}_{m_2},
\dots\underbrace{\delta_k,\dots,\delta_k}_{m_k},\dots
$$
In other words,
\begin{equation}\label{gap}
g_i(E)=\delta_k,\quad\text{if } h_E(\delta_k)\le i<h_E(\delta_{k+1}).
\end{equation}

\begin{lem}\label{lem:rem1}
  If two compact  metric spaces  $(E, \rho)$ and $(E', \rho')$   are Lipschitz equivalent, then $g_i(E)\asymp g_i(E')$.
\end{lem}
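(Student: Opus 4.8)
The plan is to show that a bi-Lipschitz map $f\colon E\to E'$ induces, up to a bounded multiplicative distortion of the scale $\delta$, a bijection between $\delta$-connected components of $E$ and $\delta'$-connected components of $E'$ for a comparable $\delta'$; this will force $h_E$ and $h_{E'}$ to be comparable after a bounded reparametrization of the variable, and from there the comparability of the gap sequences will follow from the definition \eqref{gap}. Concretely, suppose $f$ is a bijection with $L^{-1}\rho(x,y)\le\rho'(f(x),f(y))\le L\rho(x,y)$ for all $x,y\in E$, with $L\ge 1$. First I would observe that if $\{x_1,\dots,x_k\}$ is a $\delta$-chain in $E$ (consecutive points within distance $\delta$), then $\{f(x_1),\dots,f(x_k)\}$ is an $L\delta$-chain in $E'$; hence $f$ maps each $\delta$-connected component of $E$ into a single $L\delta$-connected component of $E'$, and this assignment is injective because $f$ is a bijection. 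Applying the same reasoning to $f^{-1}$ (which is $L$-bi-Lipschitz as well) shows every $\delta$-connected component of $E'$ sits inside an $L\delta$-connected component of $E$. Chaining these two inclusions gives, for every $\delta>0$,
$$
h_{E'}(L\delta)\le h_E(\delta)\le h_{E'}(L^{-1}\delta),
$$
and symmetrically $h_E(L\delta)\le h_{E'}(\delta)\le h_E(L^{-1}\delta)$, using that $h_E$, $h_{E'}$ are non-increasing.

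Next I would translate these inequalities on $h$ into the statement $g_i(E)\asymp g_i(E')$. Fix $i\ge 1$ and set $\delta=g_i(E)$, so that by \eqref{gap} we have $h_E(\delta)\le i$. From $h_{E'}(L\delta)\le h_E(\delta)\le i$ and the definition of the gap sequence applied to $E'$, one gets $g_i(E')\ge L\delta = L\,g_i(E)$ — here I need to be a little careful with the half-open intervals and the possibility of jumps, but the monotonicity of $h_{E'}$ together with \eqref{gap} makes $g_i(E')$ the supremum of those $t$ with $h_{E'}(t)\le i$, which is $\ge L\delta$. Running the symmetric argument with the roles of $E$ and $E'$ swapped yields $g_i(E)\ge L^{-1}g_i(E')$, i.e. $g_i(E')\le L\,g_i(E)$. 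Combining, $L^{-1}g_i(E)\le g_i(E')\le L\,g_i(E)$ for all $i$, which is exactly $g_i(E)\asymp g_i(E')$ with constant $c=L$.

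The only genuinely delicate point is the bookkeeping at the discontinuities of $h_E$ and $h_{E'}$: the gap sequence repeats each value $\delta_k$ with multiplicity $m_k=h_E(\delta_{k+1})-h_E(\delta_k)$, and I must make sure that the index-by-index comparison survives the fact that the two functions need not jump at comparable places or by the same amounts. The clean way to handle this is to avoid indexing by the jump points altogether and instead use the characterization $g_i(E)=\sup\{t>0: h_E(t)\le i\}$ (equivalently $\inf\{t>0: h_E(t)\ge i+1\}$, reading off directly from \eqref{gap}), so that the inequalities between $h_E$ and $h_{E'}$ pass to the sup/inf mechanically. Compactness of $E$ and $E'$ is used only to guarantee that $h_E(\delta)<\infty$ for every $\delta>0$ (each $\delta$-connected component is open in $E$, so there are finitely many), which is what makes $\{g_i(E)\}_{i\ge1}$ a well-defined sequence in the first place; I would state that as a preliminary remark before the main argument. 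This step — phrasing the comparison so the jumps cause no trouble — is the part I expect to require the most care, though it is not deep.
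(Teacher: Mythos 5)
Your overall strategy is sound and is essentially the standard argument: the paper itself only cites \cite{RRY08} and observes that the Euclidean proof transfers verbatim to metric spaces, so a self-contained chain-transfer proof like yours is exactly what is being invoked. However, two steps in your write-up are stated incorrectly and need fixing. First, the induced assignment from $\delta$-connected components of $E$ to $L\delta$-connected components of $E'$ is in general \emph{not} injective: two distinct $\delta$-components of $E$ can easily land in the same $L\delta$-component of $E'$ (e.g.\ $E=\{0,1\}\subset\R$, $E'=\{0,1/2\}$, $f(0)=0$, $f(1)=1/2$, $L=2$, $\delta=0.6$); injectivity of the point map $f$ does not help. What you actually need, and what does hold, is that this assignment is \emph{surjective} because $f$ is onto $E'$: every $L\delta$-component of $E'$ contains some $f(x)$, hence is hit by the $\delta$-component of $x$. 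Surjectivity is precisely what gives $h_{E'}(L\delta)\le h_E(\delta)$, and the same remark applied to $f^{-1}$ at scale $L^{-1}\delta$ gives $h_E(\delta)\le h_{E'}(L^{-1}\delta)$; so your displayed inequalities are correct, but the stated justification is not.

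Second, your characterization of the gap sequence is reversed. Since $h_{E'}$ is non-increasing and, by \eqref{gap}, $h_{E'}(g_i(E'))\le i$ while $h_{E'}(t)>i$ for $t<g_i(E')$, the set $\{t>0: h_{E'}(t)\le i\}$ equals $[\,g_i(E'),\infty)$; hence $g_i(E')=\min\{t: h_{E'}(t)\le i\}=\sup\{t: h_{E'}(t)\ge i+1\}$, not the supremum of $\{t:h_{E'}(t)\le i\}$ (which is $+\infty$) nor the infimum of $\{t:h_{E'}(t)\ge i+1\}$ (which is $0$). Consequently, from $h_{E'}(L\delta)\le h_E(\delta)\le i$ with $\delta=g_i(E)$ you may conclude $g_i(E')\le L\,g_i(E)$, not $g_i(E')\ge L\,g_i(E)$ as written (taken literally, your inequality together with your own symmetric bound would force $g_i(E')=L\,g_i(E)$, which is absurd). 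The correct bookkeeping is: the argument with $f$ gives $g_i(E')\le L\,g_i(E)$, the symmetric argument with $f^{-1}$ gives $g_i(E)\le L\,g_i(E')$, and together these yield $L^{-1}g_i(E)\le g_i(E')\le L\,g_i(E)$ for all $i$, which is your (correct) final display. With these two local corrections — surjectivity in place of injectivity, and the min/sup characterization of $g_i$ — your proof is complete; your remark that compactness ensures $h_E(\delta)<\infty$ because $\delta$-components are open is fine.
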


\begin{proof} The lemma is proved in \cite{RRY08} in the case that both $E$ and $E'$ are subsets of $(\R^d,|\cdot|)$.
Their proof works for metric spaces without any change.
\end{proof}

Miao \emph{et al.} \cite{MXX17} gave the following criterion for power law property in the case that $E\subset \R^d$, but the conclusion and proof work for metric spaces. Here we give an alternative  proof for the reader's sake.

\begin{lem}[\cite{MXX17}]\label{MXX}
Let $(E, \rho)$ be a compact metric space and let $\gamma>0$. Then
$$
g_i(E)\asymp i^{-1/\gamma}\Leftrightarrow h_E(\delta)\asymp\delta^{-\gamma}.
$$
\end{lem}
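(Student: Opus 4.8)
The plan is to prove the two implications of the equivalence $g_i(E)\asymp i^{-1/\gamma}\Leftrightarrow h_E(\delta)\asymp\delta^{-\gamma}$ directly from the defining relation \eqref{gap}, which says that $g_i(E)=\delta_k$ precisely when $h_E(\delta_k)\le i<h_E(\delta_{k+1})$. The underlying picture is that $h_E$ and the gap sequence $\{g_i(E)\}$ are, up to the bookkeeping of multiplicities, generalized inverse functions of one another: $g_i(E)\le\delta$ iff $h_E(\delta)\le i$ (or a statement very close to this, with care at the jump points $\delta_k$). So the proof is essentially the standard fact that a power law for a monotone function is equivalent to the reciprocal power law for its inverse, carried out carefully at the discontinuities. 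I expect no deep obstacle; the work is entirely in getting the inequalities at the jump points right.

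First I would record the key monotonicity/inverse relation. For $\delta\in[\delta_{k+1},\delta_k)$ we have $h_E(\delta)=h_E(\delta_{k+1})$, and for any $i$ with $h_E(\delta_k)\le i<h_E(\delta_{k+1})$ we have $g_i(E)=\delta_k$. From these I would extract: (a) $g_{h_E(\delta)}(E)\ge\delta$ for every $\delta$ that is not $\ge\delta_1$, more precisely if $\delta\in[\delta_{k+1},\delta_k)$ then $i:=h_E(\delta)=h_E(\delta_{k+1})$ satisfies $g_i(E)=\delta_k>\delta$ wait — one must instead use $i=h_E(\delta)$ and note $g_i(E)\ge \delta$ since $g_i(E)=\delta_k$ with $\delta_k>\delta\ge\delta_{k+1}$; and (b) conversely $h_E(g_i(E))\le i$ for all $i$, since $g_i(E)=\delta_k$ with $h_E(\delta_k)\le i$. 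These two facts let one translate a bound on $h_E$ into a bound on $g_i$ and vice versa.

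Now suppose $h_E(\delta)\asymp\delta^{-\gamma}$, say $c^{-1}\delta^{-\gamma}\le h_E(\delta)\le c\,\delta^{-\gamma}$ for all $0<\delta<\operatorname{diam}E$ (for $\delta$ large, $h_E(\delta)=1$ and the comparison is trivial on the bounded range, which only affects the constant). Fix $i$ and put $\delta=g_i(E)=\delta_k$. From $h_E(\delta_k)\le i$ and the lower bound I get $c^{-1}\delta_k^{-\gamma}\le i$, i.e. $g_i(E)=\delta_k\le (ci)^{-1/\gamma}\cdot$const — wait, $\delta_k^{-\gamma}\le ci$ gives $\delta_k\ge (ci)^{-1/\gamma}$; that is the wrong direction, so for the upper bound on $g_i$ I instead use that $i<h_E(\delta_{k+1})\le c\,\delta_{k+1}^{-\gamma}$, hence $\delta_{k+1}>(i/c)^{-1/\gamma}$... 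Let me instead argue cleanly: pick $\delta'<\delta_k$ arbitrarily close to $\delta_k$; then $h_E(\delta')\ge h_E(\delta_k^-)= h_E(\delta_{k})$... The honest route, which I will carry out, is: use relation (a) applied to a suitable $\delta$ slightly above $g_i(E)$ and relation (b) directly, then sandwich. Concretely, (b) gives $h_E(g_i(E))\le i$, so $c^{-1}g_i(E)^{-\gamma}\le i$ and $g_i(E)\ge(ci)^{-1/\gamma}$; for the matching upper bound, note that for any $\delta$ with $h_E(\delta)>i$ we must have $g_i(E)\le\delta$ (since the gap sequence at index $i$ is $\delta_k$ with $h_E(\delta_{k+1})>i$, and $\delta_{k}$ is the largest jump with this property), and choosing $\delta$ with $\delta^{-\gamma}\approx ci$ forces $h_E(\delta)\ge c^{-1}\delta^{-\gamma}\ge$ something $>i$, yielding $g_i(E)\le\delta\asymp i^{-1/\gamma}$. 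This gives $g_i(E)\asymp i^{-1/\gamma}$.

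For the converse, suppose $g_i(E)\asymp i^{-1/\gamma}$, say $c^{-1}i^{-1/\gamma}\le g_i(E)\le c\,i^{-1/\gamma}$. Fix $\delta\in(0,\operatorname{diam}E)$ and let $i=h_E(\delta)$, so $\delta\in[\delta_{k+1},\delta_k)$ with $i=h_E(\delta_{k+1})$. Then $g_{i-1}(E)=\delta_k>\delta$ (taking $i\ge2$; the case $i=1$ is the trivial range) gives $\delta<g_{i-1}(E)\le c(i-1)^{-1/\gamma}$, hence $i-1\le c^\gamma\delta^{-\gamma}$, i.e. $h_E(\delta)\le c^\gamma\delta^{-\gamma}+1$; and $g_i(E)\le\delta_{k+1}\le\delta$ (since the value of the gap sequence at index $i=h_E(\delta_{k+1})$ is $\delta_{k+1}$... need $h_E(\delta_{k+1})\le i<h_E(\delta_{k+2})$, true) gives $c^{-1}i^{-1/\gamma}\le g_i(E)\le\delta$, hence $i\ge c^{-\gamma}\delta^{-\gamma}$. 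Combining, $h_E(\delta)\asymp\delta^{-\gamma}$ on the bounded range, and on $\delta\ge\delta_1$ both sides are comparable to $1$ there (after absorbing constants on the compact parameter interval $[\delta_1,\operatorname{diam}E]$). The only subtlety to handle with care is the off-by-one in indices at jump points and the degenerate small-index / large-$\delta$ regime; both are routine, and I expect this to be the only place where one must be pedantic. This completes the proof.
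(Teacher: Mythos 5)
Your overall strategy---reading \eqref{gap} as saying that $h_E$ and $\{g_i(E)\}$ are generalized inverses and transferring the power law across this correspondence---is sound, and it actually bypasses the paper's intermediate step (the paper first derives the bound $\sup_k \delta_k/\delta_{k+1}<\infty$ from either hypothesis and then invokes a routine estimate). Your converse direction is correct: for $\delta\in[\delta_{k+1},\delta_k)$ and $i=h_E(\delta)=h_E(\delta_{k+1})$ one has $g_i(E)=\delta_{k+1}\le\delta$ and $g_{i-1}(E)=\delta_k>\delta$, which give the two bounds on $h_E(\delta)$; likewise the lower bound $g_i(E)\ge (ci)^{-1/\gamma}$ in the forward direction, via $h_E(g_i(E))\le i$, is fine.

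The genuine problem is that your key inverse relation is stated backwards, and the forward direction's upper bound uses the backwards version. From \eqref{gap} and the monotonicity of $h_E$ one gets exactly: $g_i(E)\le\delta$ if and only if $h_E(\delta)\le i$. In particular $g_{h_E(\delta)}(E)=\delta_{k+1}\le\delta$ (your ``(a)'' asserts $g_{h_E(\delta)}(E)=\delta_k\ge\delta$, which contradicts \eqref{gap}, since $i=h_E(\delta_{k+1})$ violates $i<h_E(\delta_{k+1})$), and $h_E(\delta)>i$ forces $g_i(E)>\delta$---the opposite of your claim ``for any $\delta$ with $h_E(\delta)>i$ we must have $g_i(E)\le\delta$''. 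Schematically: if $h_E$ jumps from $1$ to $3$ at $\delta_1$, then $g_1=g_2=\delta_1$, so any $\delta$ just below $\delta_1$ has $h_E(\delta)=3>2$ yet $g_2(E)=\delta_1>\delta$. Hence choosing $\delta$ with $c^{-1}\delta^{-\gamma}>i$ yields no upper bound on $g_i(E)$, and that step fails as written. The repair is one line: use the \emph{upper} bound on $h_E$ instead of the lower one. Set $\delta=c^{1/\gamma}i^{-1/\gamma}$; then $h_E(\delta)\le c\delta^{-\gamma}\le i$, and the correct implication $h_E(\delta)\le i\Rightarrow g_i(E)\le\delta$ gives $g_i(E)\le c^{1/\gamma}i^{-1/\gamma}$ for all large $i$, the finitely many small $i$ being absorbed into the constant as you note. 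With this step replaced (and ``(a)'' corrected to $g_{h_E(\delta)}(E)\le\delta$, which is in fact what you use in the converse), your argument is complete and is organized differently from the paper's proof, which routes everything through \eqref{bound-gap}.
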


\begin{proof} Let $\{\delta_k\}_{k\ge 1}$ be the set of discontinuous points of $h_E$ in decreasing order.
First, we show that either
$g_i(E)\asymp i^{-1/\gamma}, i\geq 1 $ or $h_E(\delta)\asymp\delta^{-\gamma}, \delta\in (0,1)$ will imply that
\begin{equation}\label{bound-gap}
M=\sup_{k\geq 1}\frac{\delta_k}{\delta_{k+1}}<\infty.
\end{equation}
If $g_i(E)\asymp i^{-1/\gamma}, i\geq 1, $ holds, then for any $k\geq 1$, there exists $i$ such
that $g_i(E)=\delta_k$ and $g_{i+1}(E)=\delta_{k+1}$, so \eqref{bound-gap} holds in this case.
If  $h_E(\delta)\asymp\delta^{-\gamma}, \delta\in (0,1),$ holds, then this together with
$\lim_{\delta\to (\delta_{k+1})+} h_E(\delta)=h_E(\delta_k)$ imply \eqref{bound-gap} again.

Finally, using \eqref{gap} and \eqref{bound-gap}, we obtain the lemma by a routine estimation.
\end{proof}


 \begin{remark}\label{equalF}
\emph{
 Let $m>1$ be an integer and $\gamma>0$.
 Since $h_E(\delta)$ is non-increasing, we see that  $h_E(m^{-k})\asymp m^{k\gamma}
 (k\ge 0)$,  implies that
 $E$ satisfies the power law property.
 }
\end{remark}


\section{\textbf{Proof of Theorem \ref{thm:cube}}}

Let $m\ge 2$ be an integer. Let $\bD_F=\{\bj_1,\cdots,\bj_r\}\subset\{0,1,\dots,m-1\}^d$.
For $\bj\in \bD_F$ and $y\in\mathbb{R}^d$, we define $\varphi_\bj(y)=\frac{1}{m}(y+\bj)$,
then $\{\varphi_\bj\}_{\bj\in \bD_F}$ is an IFS.
The unique non-empty compact set $F=F(m,\bD_F)$ satisfying
\begin{equation}\label{F}
F=\underset{\bj\in\bD_F}{\bigcup}\varphi_\bj(F)
\end{equation}
is called a \emph{$d$-dimensional fractal cube}, see \cite{XX10}.

For $\boldsymbol{\sigma}=\sigma_1\dots\sigma_k\in\bD_F^k$, we define $\varphi_{\boldsymbol{\sigma}}(z)=\varphi_{\sigma_1}\circ\dots\circ \varphi_{\sigma_k}(z)$.
We call
$$
F_k=\bigcup_{\boldsymbol{\sigma}\in\bD_F^k}\varphi_{\boldsymbol{\sigma}}([0,1]^d)
$$
the $k$-th approximation of $F$. We call $\varphi_{\boldsymbol{\sigma}}([0,1]^d)$ a \emph{$k$-th basic cube}, and call it a \emph{$k$-th boundary cube}   if in addition $\varphi_{\boldsymbol{\sigma}}([0,1]^d)\cap\partial[0,1]^d\ne\emptyset$.
Clearly, $F_k\subset F_{k-1}$ for all $k\ge 1$ and $F=\bigcap_{k=0}^\infty F_k$.

Recall that $F$ is degenerated if $F$ is contained in a face of $[0,1]^d$ with dimension $d-1$.
A connected component $C$ of $F_k$ is called a \emph{$k$-th island} of $F_k$ if $C\cap\partial[0,1]^d=\emptyset$, see \cite{HR20}.
Huang and Rao (\cite{HR20},Theorem 3.1) proved that

\begin{prop}[\cite{HR20}]\label{hasisland}
Let $F$ be a $d$-dimensional fractal cube which is non-degenerated. Then $F$ has trivial points if and only if there is an integer $p\ge 1$ such that $F_p$ contains an island.
\end{prop}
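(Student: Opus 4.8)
The plan is to route both implications through one elementary device that converts a separation of the limit set $F$ into a genuine disconnection of a finite approximation $F_p$. The device is this: if $Y\subseteq F$ is clopen with $\eta:=\dist(Y,F\setminus Y)>0$, then, since $F_p\to F$ in the Hausdorff metric, for all large $p$ every point of $F_p$ lies within $\eta/3$ of $F$, so writing $Y_r$ for the $r$-neighbourhood we get $F_p=\big(F_p\cap Y_{\eta/3}\big)\sqcup\big(F_p\cap (F\setminus Y)_{\eta/3}\big)$, the two parts being separated by distance $\ge\eta/3$. Hence $F_p\cap Y_{\eta/3}$ is a union of connected components of $F_p$; and it is an \emph{island} precisely when $Y_{\eta/3}$ misses $\partial[0,1]^d$, i.e.\ when $\dist(Y,\partial[0,1]^d)>0$. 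Both directions of the proposition will be organized around this observation.

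For the implication \emph{island $\Rightarrow$ trivial point}, suppose $C$ is an island of $F_p$, so $\dist(C,F_p\setminus C)>0$ and $\dist(C,\partial[0,1]^d)>0$. Then $F\cap C$ is clopen in $F$, and since $C$ is a positive distance from $\partial[0,1]^d$ it is separated from every part of $F$ lying outside the cubes of $C$. Picking any basic cube $\varphi_{\boldsymbol\sigma}([0,1]^d)\subseteq C$, self-similarity makes $\varphi_{\boldsymbol\sigma}(C)$ a scaled island sitting strictly inside that cube, hence a clopen subset of $F$. Iterating, I would build basic cubes $Q_1\supseteq Q_2\supseteq\cdots$ with $\diam Q_k\to0$, each $Q_{k+1}$ contained in a scaled island $I_k\subseteq Q_k$ that is clopen in $F$. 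Their intersection is a single point $z$, and the connected component of $z$ in $F$ is trapped inside every $I_k$; as $\diam I_k\to0$ this forces the component to be $\{z\}$, so $z$ is a trivial point.

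For the converse I start from a trivial point $z$. In a compact metric space components equal quasicomponents, so $\{z\}=\bigcap\{U:U\text{ clopen},\,z\in U\}$, and a standard compactness argument yields a single clopen $U\ni z$ with $\diam U<\varepsilon$ and $\eta:=\dist(U,F\setminus U)>0$. By the device, $F_p\cap U_{\eta/3}$ is already a union of components of $F_p$ for large $p$; the only thing that can go wrong is that it meets $\partial[0,1]^d$ (which it must when $z$ lies on a face). Thus the converse reduces to the claim that \emph{a non-degenerate $F$ with a trivial point admits an interior gap}, namely a clopen $U^\ast$ with both $\dist(U^\ast,F\setminus U^\ast)>0$ and $\dist(U^\ast,\partial[0,1]^d)>0$; such a $U^\ast$ feeds straight into the device to produce an island.

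This claim is the main obstacle, and it is exactly where non-degeneracy must be used. The strategy is to relocate the isolated structure into the interior. For large $p$ the component $C_p$ of $z$ in $F_p$ is a small connected union of basic cubes that touches $\partial[0,1]^d$ only along the faces in some set $J$ determined by which faces $z$ lies on; being a component, $\varphi_{\boldsymbol\rho}(C_p)$ is automatically isolated inside $\varphi_{\boldsymbol\rho}(F_p)$ for any word $\boldsymbol\rho$. Non-degeneracy provides, for each coordinate $j$, a digit of $\bD_F$ with $j$-th entry $\ge1$ and one with $j$-th entry $\le m-2$, so by concatenating suitable digits one can build $\boldsymbol\rho$ that pulls $\varphi_{\boldsymbol\rho}(C_p)$ off every face in $J$, making it avoid $\partial[0,1]^d$. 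The genuinely delicate point — the technical heart of the argument, and the step I expect to demand the most care — is to choose $\boldsymbol\rho$ so that, along each face of $Q:=\varphi_{\boldsymbol\rho}([0,1]^d)$ that $\varphi_{\boldsymbol\rho}(C_p)$ still reaches, the adjacent basic cube of $F_{|\boldsymbol\rho|+p}$ is empty, so that the relocated component does not reconnect to a neighbouring copy of $F$ across $\partial Q$. Establishing that such a $\boldsymbol\rho$ exists is a combinatorial selection problem about the digit set $\bD_F$, and it is here that the full force of non-degeneracy (rather than the two reductions above) is needed; once it is solved, $\varphi_{\boldsymbol\rho}(C_p)$ is an interior island and the proof closes.
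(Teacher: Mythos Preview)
The paper does not supply its own proof of this proposition; it is quoted as Theorem~3.1 of \cite{HR20} and used as a black box, so there is no in-paper argument to benchmark against.

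Judged on its own, your direction \emph{island $\Rightarrow$ trivial point} is correct: iterating the island under a cylinder map whose image cube lies inside it produces nested clopen subsets of $F$ with diameters tending to zero, and their intersection is a trivial point. The converse, however, is not actually proved. You correctly isolate the difficulty --- when the trivial point $z$ lies on $\partial[0,1]^d$, the relocated piece $\varphi_{\boldsymbol\rho}(C_p)$ may reconnect to neighbouring copies across $\partial(\varphi_{\boldsymbol\rho}([0,1]^d))$ --- but you then stop, calling the remaining step ``a combinatorial selection problem'' and writing that ``once it is solved, \dots\ the proof closes.'' That unsolved step \emph{is} the substantive content of the implication. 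Moreover, the target you set for it (choose $\boldsymbol\rho$ so that the adjacent $|\boldsymbol\rho|$-cube on every contact face is absent from $F_{|\boldsymbol\rho|}$) is stronger than what is needed and is not obviously obtainable from non-degeneracy alone: non-degeneracy only says that for each coordinate the digit set is not confined to an extreme slice; it gives no direct control over which \emph{neighbours} of a given word lie in $\bD_F^{|\boldsymbol\rho|}$. A complete argument must exploit more directly the hypothesis that $\{z\}$ is a connected component of $F$ itself --- not merely that $C_p$ is small --- and as written your relocation scheme does not do this.
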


%

For $A\subset\mathbb{R}^d$,  we denote by $N_c(A)$ the number of connected components of $A$.

\begin{lem}\label{inverse}
Let $F$ be a $d$-dimensional fractal cube defined in \eqref{F}. If $F$ has no trivial point, then for any $\delta\in(0,1)$ we have
\begin{equation}\label{eqlem1}
h_F(\delta)\le c_1\delta^{-\log_m(r-1)},
\end{equation} where $c_1>0$ is a constant.
\end{lem}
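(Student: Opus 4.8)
The plan is to prove the bound by counting $\delta$-connected components through the approximation structure, using the key input that when $F$ has no trivial point, Proposition \ref{hasisland} tells us no $F_p$ contains an island; that is, every connected component of every $F_k$ touches the boundary $\partial[0,1]^d$. First I would reduce to $\delta$ of the form $m^{-k}$: since $h_F$ is non-increasing, it suffices to bound $h_F(m^{-k})$ by $c_1 (r-1)^k$ (up to a constant), and then the general case follows by the argument in Remark \ref{equalF}. The geometric point is that two points of $F$ lying in a common connected component of $F_k$ are $c\,m^{-k}$-equivalent within $F$ for a dimensional constant $c$ (one can walk between adjacent basic cubes of generation $k$), so $h_F(c\, m^{-k})\le N_c(F_k)$; absorbing the constant $c$ costs only a bounded factor. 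Hence the whole problem becomes: show $N_c(F_k)\le c_1'(r-1)^k$.

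Next I would estimate $N_c(F_k)$ recursively. Write $F_{k+1}=\bigcup_{\bj\in\bD_F}\varphi_\bj(F_k)$. Each $\varphi_\bj(F_k)$ is a scaled copy of $F_k$ sitting in the $\bj$-th subcube, so it has $N_c(F_k)$ connected components, and a priori $N_c(F_{k+1})\le r\,N_c(F_k)$, which only gives $r^k$ — too weak. To gain the improvement from $r$ to $r-1$, I would exploit that components are merged when copies in adjacent subcubes share boundary points. The no-island hypothesis is exactly what forces enough merging: every connected component of each $\varphi_\bj(F_k)$ meets the boundary of the subcube $\varphi_\bj([0,1]^d)$. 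I would track "boundary components" — components of $F_k$ meeting $\partial[0,1]^d$ — and show that the full digit set $\bD_F$ cannot behave like a disjoint union at the top level unless $F$ is degenerate (which is excluded) or has a trivial point; in either non-excluded situation at least one pair of adjacent subcubes forces a merge at each scale, yielding the recursion $N_c(F_{k+1})\le (r-1)N_c(F_k) + (\text{lower order})$, hence $N_c(F_k)\le c_1'(r-1)^k$. Taking logarithms, $(r-1)^k = m^{k\log_m(r-1)} = \delta^{-\log_m(r-1)}$ with $\delta=m^{-k}$, which is the claimed exponent.

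The main obstacle is making the merging argument precise and uniform: I need to show that for \emph{every} $k$, passing from generation $k$ to generation $k+1$ the number of components is at most $(r-1)$ times the previous count, not merely $r$ times. The subtlety is that the $r$ copies $\varphi_\bj(F_k)$ might be arranged so that no two are adjacent in $F_{k+1}$ at some particular scale, even though overall $F$ is connected-component-nontrivial; one must rule this out using non-degeneracy together with the structure of $\bD_F\subset\{0,\dots,m-1\}^d$. The cleanest route is probably to first prove that if $F$ has no trivial point then, after possibly replacing the IFS by an iterate (working with $F(m^p,\bD_F^p)$, which changes neither $F$ nor the exponent $\log_m(r-1)$ since $\log_{m^p}(r^p-\cdots)$ needs care — actually one stays with $r-1$ by a direct induction), one can always find a "connecting digit" and run the recursion. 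I would isolate this combinatorial-topological fact as the heart of the proof and treat the rest as the routine scale-reduction bookkeeping indicated above.
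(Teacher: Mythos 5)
Your reduction to scales $\delta=m^{-k}$ and your use of Proposition \ref{hasisland} (no trivial points $\Rightarrow$ no $F_p$ has an island, so every connected component of every $F_k$ meets $\partial[0,1]^d$) are both the right ingredients, and they match the paper's setup. The genuine gap is in how you try to extract the exponent $\log_m(r-1)$. Your proposed recursion $N_c(F_{k+1})\le (r-1)N_c(F_k)+(\text{lower order})$ is not justified by the mechanism you offer: "at least one pair of adjacent subcubes forces a merge at each scale" would only give $N_c(F_{k+1})\le r\,N_c(F_k)-1$, i.e.\ it removes one component, whereas to replace the factor $r$ by $r-1$ you would need on the order of $N_c(F_k)$ merges at every scale, and nothing in the no-island hypothesis produces that. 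Worse, even the single merge is not guaranteed: adjacency of the subcubes $\varphi_{\bj_1}([0,1]^d)$ and $\varphi_{\bj_2}([0,1]^d)$ does not imply that $\varphi_{\bj_1}(F_k)$ and $\varphi_{\bj_2}(F_k)$ intersect, since the two copies need not place any points on the shared face. So the heart of your argument, as described, does not go through.

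The missing idea is where $r-1$ actually comes from in the paper: non-degeneracy. After reducing to the non-degenerated case (note the degenerate case is \emph{not} excluded by the lemma; the paper handles it by observing a degenerate fractal cube is isometric to a non-degenerated one with the same number of digits), one observes that each of the $2d$ faces $\Omega_i$ of $[0,1]^d$ misses at least one digit, so the number $r_i$ of first-level cubes meeting $\Omega_i$ satisfies $r_i\le r-1$; hence the number of $k$-th boundary cubes is at most $2d(r-1)^k$. Since no component of $F_k$ is an island, every component contains a boundary cube, so $N_c(F_k)\le 2d(r-1)^k$ directly — no merging recursion is needed. Combining this with your (correct) scale bookkeeping, namely that points of $F$ in one component of $F_{k+q}$ with $q=\lfloor\log_m\sqrt d\rfloor+1$ lie in one $m^{-k}$-connected component, gives $h_F(m^{-k})\le 2d(r-1)^{k+q}$ and hence \eqref{eqlem1}. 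If you want to salvage your write-up, replace the merging recursion by this boundary-cube count; the rest of your outline can stay.
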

\begin{proof}
Since a degenerated fractal cube $F=F(m, \SD_F)$ is always isometric to a non-degenerated fractal cube
$F'=F(m, \SD_{F'})$ such that $\#\SD_F=\#\SD_{F'}$, so we only need to consider the case that $F$ is non-degenerated.

Let $q=\lfloor\log_m\sqrt{d}\rfloor+1$, where $\lfloor a\rfloor$ denotes the greatest integer no larger than $a$.
We will prove
\begin{equation}\label{unstable}
h_F(m^{-k})\le 2d(r-1)^{k+q},\quad\text{for all }k\ge 1
\end{equation}
by induction on $d$. Notice  that $r\ge m$ since $F$ has no trivial point.

If $d=1$, then $r=m$, $F=[0,1]$ and $h_F(m^{-k})=1$ for all $k\geq 1$, so \eqref{unstable} holds in this case.

Assume that  \eqref{unstable} holds for all $d'$-dimensional fractal cubes which have no trivial point, where $d'<d$. Now let $F$ be a $d$-dimensional fractal cube which has no trivial point. Denote the $(d-1)$-faces of $[0,1]^d$ by $\Omega_1,\dots,\Omega_{2d}$. Let
$$
r_i=\#\{j\in\Sigma;~\varphi_j([0,1]^d)\cap\Omega_i\ne\emptyset\}, \\
$$
be the number of $1$-th boundary cube intersecting the face $\Omega_i$.
Since $F$ is non-degenerated, we have $r_i\le r-1$ for all $1\le i\le 2d$.
Clearly, the number of $k$-th boundary cubes which intersect $\Omega_i$ is $r_i^k$,
so the number of $k$-th boundary cubes of $F_k$ are at most $2d(r-1)^k$.
Notice that $F$ has no trivial point, then $F_k$ has no island by Proposition \ref{hasisland}.
Thus each connected component of $F_k$ contains at least one $k$-th boundary cube.
By the choice of $q$ we see that the diameter of a $(k+q)$-th basic cube is less than $m^{-k}$, then the points of $F$ in a connected component of $F_{k+q}$ are contained in a $m^{-k}$-connected component of $F$. Therefore, we have
$$
h_F(m^{-k})\le N_c(F_{k+q}) \le 2d(r-1)^{k+q}.
$$
This completes the proof of  \eqref{unstable}.
Finally, by an argument similar to Lemma \ref{equalF}, we obtain \eqref{eqlem1}.
\end{proof}


\begin{proof}[\textbf{Proof of Theorem \ref{thm:cube}}]
Let $F$ be a fractal cube defined by \eqref{F}.
Notice that $\dim_B F=\log_m r.$
The necessity of the theorem is guaranteed by Lemma \ref{inverse}.

Now we prove the sufficiency. Suppose that $F$ has trivial points.
By Remark \ref{equalF},
it is sufficient to show
\begin{equation}\label{eq:h}
h_F(m^{-k})\asymp m^{k\dim_BF},\quad k\ge 0.
\end{equation}
As before, we only need to consider the case that $F$ is non-degenerated.

By Proposition \ref{hasisland}, $F_p$ contains a $p$-th island $C$ for some $p\geq 1$.
Fix $k>p$. Clearly any $m^{-k}$-connected component of $F$ is contained in a connected component of $F_{k-1}$.
It is easy to see that $\varphi_{\boldsymbol{\sigma}}(C)$ is a $(k-1)$-th island of $F_{k-1}$ for any $\boldsymbol{\sigma}\in\bD_F^{k-p-1}$, and the distance of
any two distinct $(k-1)$-th islands of the form $\varphi_{\boldsymbol{\sigma}}(C)$ is no less than $m^{1-k}$, so
\begin{equation}\label{less}
h_F(m^{-k})\ge N_c(F_{k-1})\geq  (\#\bD_F)^{k-p-1}=r^{k-p-1}.
\end{equation}

Let $q=\lfloor\log_m \sqrt{d}\rfloor+1$. Then the points of $F$ in a $(k+q)$-th basic cube is contained in a $m^{-k}$-connected component of $F$, which implies that
$h_F(m^{-k}) \le r^{k+q}$.
This together with \eqref{less} imply $h_F(m^{-k})\asymp r^k=m^{k\dim_BF}$.
The theorem is proved.
\end{proof}

\begin{remark}
\emph{It is shown in \cite{HR20}   that if a fractal cube $F$ has a trivial point, then the Hausdorff dimension of the collection of its non-trivial points is strictly less than $\dim_HF$.}
\end{remark}

\section{\textbf{Proof of Theorem \ref{thm:law}}}
In this section, we always assume that $E$ is a self-affine Sierpi$\acute{\text{n}}$ski sponge defined in \eqref{Ssponge}.
 We call
\begin{equation}\label{eq-E_k}
E_k=\underset{\bomega\in\bD^k }\bigcup S_\bomega([0,1]^d)
\end{equation}
the \emph{$k$-th approximation} of $E$, and call each $S_\bomega([0,1]^d)$ a \emph{$k$-th basic pillar} of $E_k$.

Recall that   $\pi_j(x_1,\dots, x_d)=(x_1,\dots, x_j)$, $1\le j \le d-1$ and
by convention we set $\#\pi_0(\bD)=1$ and  $\pi_d=id$.
Note that  $\pi_j(E)$ is a self-affine Sierpi$\acute{\text{n}}$ski sponge which determined by $\{n_\ell\}_{\ell=1}^{j}$ and $\pi_j(\bD)$.
By \cite{KP96}, the box-counting dimension of $E$ is
\begin{equation}\label{boxdim}
\dim_BE=\sum_{j=1}^d\frac{1}{\log n_j}\log\frac{\#\pi_j(\bD)}{\#\pi_{j-1}(\bD)},
\end{equation}

Recall that $\Lambda=\text{diag}(n_1,\dots, n_d)$.
 A $k$-th basic pillar of $E$ can be represented by
\begin{equation}\label{eq_pillar}
S_{\omega_1\dots\omega_k}([0,1]^d)
 =\sum_{l=1}^k \Lambda^{-l}\omega_l+\prod_{j=1}^d[0,n_j^{-k}],
\end{equation}
where $\omega_l\dots \omega_k\in\bD^k$.
For $1\le j\le d$, denote $\ell_j(k)=\lfloor k\log n_d/\log n_j\rfloor$.
Clearly $n_j^{-\ell_j(k)}\approx n_d^{-k}$ and $\ell_1(k)\geq \ell_2(k)\geq \cdots \geq \ell_d(k)=k$.
We call
\begin{equation}\label{eq_approx_cube}
Q_k:=\left(\sum_{l=1}^{\ell_1(k)}\frac{i_1(\omega_l)}{n_1^l},\dots,
\sum_{l=1}^{\ell_d(k)}\frac{i_d(\omega_l)}{n_d^l}\right)+\prod_{j=1}^d[0,n_j^{-\ell_j(k)}],
\end{equation}
a \emph{$k$-th approximate box} of $E$, if $\omega_l=(i_1(\omega_l),\dots,i_d(\omega_l))\in\bD$ for  $1\le l\le \ell_1(k)$.
Let $\widetilde{E}_k$ be the union of all $k$-th approximate boxes. It is clear that $\widetilde{E}_k\subset E_k$.

Let $\mu$ be the uniform Bernoulli measure on $E$, that is, every $k$-th basic pillar has measure $1/N^k$ in $\mu$.
The following lemma illustrates a nice covering property of self-affine Sierpi$\acute{\text{n}}$ski sponges;
it is contained implicitly in \cite{KP96} and it is a special case of a result in \cite{HRWX}.

\begin{lem}[\cite{KP96, HRWX}] \label{lem:box} Let $E$ be a self-affine Sierpi$\acute{\text{n}}$ski sponge.
Let $R$ be a $k$-th basic pillar of $E$. Then the number of $(k+p)$-th approximate boxes contained in $R$
is comparable to
$$\frac{n_d^{(k+p)\dim_B E}}{N^k}, \quad p\geq 1.$$
\end{lem}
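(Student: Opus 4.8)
The plan is to count the $(k+p)$-th approximate boxes inside a fixed $k$-th basic pillar $R = S_{\bomega}([0,1]^d)$ directly, using the product structure of the construction. Recall that an approximate box is specified by the coordinate-digit data $(i_1(\omega_l),\dots,i_d(\omega_l))$ for $1\le l\le \ell_1(k+p)$, subject to the constraint that each $\omega_l$ lies in $\bD$. Fixing $R$ fixes the first $k$ ``generations'' $\omega_1,\dots,\omega_k$; what remains free is the choice of admissible digit-strings that extend $\bomega$ up to level $\ell_j(k+p)$ in the $j$-th coordinate, for $j=1,\dots,d$. The key combinatorial observation, which is exactly the content of the Kenyon--Peres analysis, is that the number of ways to extend coordinate $j$ from level $\ell_j(k)$ to level $\ell_j(k+p)$, given the already-chosen digits in coordinates $1,\dots,j-1$, is comparable (uniformly in the choices) to $\bigl(\#\pi_j(\bD)/\#\pi_{j-1}(\bD)\bigr)^{\ell_j(k+p)-\ell_j(k)}$.

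First I would set up notation carefully: write $a_j := \#\pi_j(\bD)/\#\pi_{j-1}(\bD)$ and $b_j := \ell_j(k+p) - \ell_j(k)$, noting $n_j^{-\ell_j(k)}\asymp n_d^{-k}$ so that $b_j = \bigl(\log n_d/\log n_j\bigr)p + O(1)$. Then I would argue that the number of $(k+p)$-th approximate boxes inside $R$ is comparable to $\prod_{j=1}^d a_j^{\,b_j}$. The upper bound is easy: at each coordinate $j$ and each of the $b_j$ extra levels, the digit $i_j(\omega_l)$ must be consistent with the previously fixed digits in coordinates $<j$, i.e.\ must be one of at most $a_j$ choices (the fibre of $\pi_j(\bD)\to\pi_{j-1}(\bD)$ could have varying size, but it is bounded by $\max$ of these, and more carefully its size is exactly $a_j$ on average). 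The lower bound requires that the digit-extensions can actually be performed independently across levels and that $R$, as a $k$-th \emph{pillar}, already commits to a column of $\pi_{d-1}(\bD)$-data only up to level $k$ in coordinate $d$ but up to level $\ell_j(k)$ in coordinate $j$ — here one uses $\widetilde E_k\subset E_k$ and the fact that every $k$-th pillar contains at least one, hence (by homogeneity of the Bernoulli measure) comparably many, approximate boxes at each finer scale.

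Once the count $\asymp \prod_{j=1}^d a_j^{\,b_j}$ is established, I would take logarithms: $\sum_{j=1}^d b_j \log a_j = \sum_{j=1}^d \bigl(\tfrac{\log n_d}{\log n_j} p + O(1)\bigr)\log a_j = p\log n_d \cdot \sum_{j=1}^d \tfrac{\log a_j}{\log n_j} + O(1) = p\log n_d\cdot \dim_B E + O(1)$ by the dimension formula \eqref{boxdim}. This shows the number of $(k+p)$-th approximate boxes in $R$ is $\asymp n_d^{p\dim_B E}$; dividing by nothing here, but to match the stated form $n_d^{(k+p)\dim_B E}/N^k$ I would separately note that the total number of $(k+p)$-th approximate boxes of $E$ is $\asymp n_d^{(k+p)\dim_B E}$ (the case $k=0$, or apply the pillar count with $k$ pillars) while there are exactly $N^k$ pillars of level $k$, each containing comparably many approximate boxes by the homogeneity of $\mu$; so the count per pillar is $\asymp n_d^{(k+p)\dim_B E}/N^k$, and one checks $n_d^{(k+p)\dim_B E}/N^k \asymp n_d^{p\dim_B E}$ would only hold if $N = n_d^{\dim_B E}$, which is generally false — hence the correct route is the ``divide the global count by $N^k$'' argument, relying on uniformity of the per-pillar count rather than trying to re-derive it coordinate-by-coordinate inside one pillar.

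The main obstacle is the uniformity in the lower bound: a priori the fibre sizes $\#\{\,\bi\in\pi_j(\bD): \pi_{j-1}(\bi) = \text{fixed}\,\}$ vary with the base point, so the number of admissible extensions of a given pillar could fluctuate from pillar to pillar and from level to level. The resolution is that these fluctuations are controlled by a fixed finite amount of data (the $\bD$-structure), so over $b_j$ levels the product telescopes to within a multiplicative constant depending only on $\bD$ and $d$, not on $k,p$; making this precise is essentially re-running the covering argument of \cite{KP96}, and since the lemma is explicitly quoted from \cite{KP96, HRWX} I would present this as a sketch and cite those sources for the full combinatorial bookkeeping.
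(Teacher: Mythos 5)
The paper itself does not prove this lemma (it is quoted from \cite{KP96, HRWX}), so your sketch has to stand on its own, and its central step is wrong: you treat the $k$-th basic pillar $R=S_{\omega_1\cdots\omega_k}([0,1]^d)$ as if it fixed the digits of coordinate $j$ up to level $\ell_j(k)$, i.e.\ as if it were a $k$-th \emph{approximate box}. By \eqref{eq_pillar}, a pillar fixes the digits only up to level $k$ in \emph{every} coordinate (it is an $n_1^{-k}\times\cdots\times n_d^{-k}$ box, not a near-cube), so the number of free levels in coordinate $j$ is $\ell_j(k+p)-k$, not your $b_j=\ell_j(k+p)-\ell_j(k)$. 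Your product $\prod_j a_j^{b_j}\asymp n_d^{p\dim_BE}$ is the count of $(k+p)$-th approximate boxes inside a $k$-th \emph{approximate box}, which is exactly why it fails to match the stated formula; and your ensuing diagnosis — that a per-pillar, coordinate-by-coordinate count cannot produce the stated form — is also incorrect. With the right exponent the count inside $R$ is exactly $\prod_{j=1}^d\bigl(\#\pi_j(\bD)/\#\pi_{j-1}(\bD)\bigr)^{\ell_j(k+p)-k}$; taking logarithms, using $\ell_j(k+p)=(k+p)\log n_d/\log n_j+O(1)$, \eqref{boxdim}, and $\prod_j \#\pi_j(\bD)/\#\pi_{j-1}(\bD)=N$, this is comparable to $n_d^{(k+p)\dim_BE}/N^k$: the $N^k$ in the denominator is precisely the contribution of the $k$ levels already frozen in all $d$ coordinates.

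Two further points. First, the ``main obstacle'' you identify (fluctuating fibre sizes of $\pi_j(\bD)\to\pi_{j-1}(\bD)$) is vacuous if you count level by level instead of coordinate by coordinate: a $(k+p)$-th approximate box is specified by choosing, at each level $l$, an element of $\pi_{j(l)}(\bD)$ where $j(l)$ is the largest $j$ with $\ell_j(k+p)\ge l$, and these choices at distinct levels are completely unconstrained because the sponge uses the same digit set $\bD$ at every node; hence the per-pillar count is an exact product, identical for every pillar, with no averaging or telescoping needed. Second, your fallback ``global count divided by $N^k$'' route is correct in outline — every $(k+p)$-th approximate box determines its first $k$ digits $\omega_1,\dots,\omega_k\in\bD$ in all coordinates and so lies in exactly one $k$-th pillar, and all $N^k$ pillars contain the same number of such boxes — but as written you justify the crucial uniformity only by ``homogeneity of $\mu$'' and a citation, which is exactly the computation the lemma asks for; the exact product count above supplies it and makes the detour unnecessary.
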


\begin{coro}\label{cor:upper}
Let $V$ be a union of some $k$-th cylinders of $E$.
 Then there exists a constant $M_1>0$ such that
\begin{equation}\label{eq:V-1}
h_V(\delta)\leq M_1\mu(V)\delta^{-\dim_B E}, \quad \delta\in (0, n_d^{-k}).
\end{equation}
\end{coro}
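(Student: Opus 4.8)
The plan is to reduce the bound on $h_V(\delta)$ to a covering estimate and then feed Lemma~\ref{lem:box} into it. The elementary fact I will use is that if a metric space $X$ is covered by $m$ sets each of diameter at most $\delta$, then $h_X(\delta)\le m$: each such set lies inside a single $\delta$-connected component of $X$, and every point of $X$ lies in one of the sets, so the number of $\delta$-connected components is at most $m$. Thus it suffices to exhibit a cover of $V$ by at most $M_1\mu(V)\delta^{-\dim_B E}$ sets of diameter $\le\delta$, and the natural candidates are the approximate boxes of a suitable depth $k+p$.

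First I would write $V=\bigcup_{i=1}^{s}S_{\bomega^{(i)}}(E)$ with distinct $\bomega^{(i)}\in\bD^k$ and record that $\mu(V)\ge sN^{-k}$, since the $s$ symbolic cylinders $[\bomega^{(i)}]$ are pairwise disjoint and each has image of $\mu$-mass at least $N^{-k}$; this inequality is what lets the final bound be phrased in terms of $\mu(V)$ rather than of $s$. Next I would fix $p$: a $(k+p)$-th approximate box has side lengths $n_j^{-\ell_j(k+p)}\in[n_d^{-(k+p)},\,n_d\cdot n_d^{-(k+p)})$ (from $\ell_j(k+p)=\lfloor(k+p)\log n_d/\log n_j\rfloor$), hence diameter at most $C_0n_d^{-(k+p)}$ with $C_0=\sqrt d\,n_d$. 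For $\delta\in(0,n_d^{-k})$ I would take $p=\lceil\log_{n_d}(C_0/\delta)\rceil-k$; since $\delta<n_d^{-k}$ and $C_0\ge1$ this gives $p\ge1$, and it yields $\delta/(C_0n_d)<n_d^{-(k+p)}\le\delta/C_0$, so that all $(k+p)$-th approximate boxes have diameter $\le\delta$ while $n_d^{k+p}\le C_0n_d/\delta$.

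Then I would check the geometric point, from \eqref{eq_pillar}--\eqref{eq_approx_cube}: using $\ell_j(k+p)\ge k$ and the tail identity $\sum_{l>k}(n_j-1)n_j^{-l}=n_j^{-k}$, the $(k+p)$-th approximate box indexed by $\omega_1\cdots\omega_{\ell_1(k+p)}$ is contained in the $k$-th basic pillar $S_{\omega_1\cdots\omega_k}([0,1]^d)$, and every point of $E$ lies in some $(k+p)$-th approximate box, i.e. $E\subset\widetilde E_{k+p}$. Hence $V$ is covered by the $(k+p)$-th approximate boxes contained in the pillars $S_{\bomega^{(i)}}([0,1]^d)$, $1\le i\le s$. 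By Lemma~\ref{lem:box} each such pillar contains at most $C_1n_d^{(k+p)\dim_B E}/N^k$ of them, so $V$ is covered by at most $sC_1n_d^{(k+p)\dim_B E}N^{-k}\le C_1\mu(V)n_d^{(k+p)\dim_B E}\le C_1(C_0n_d)^{\dim_B E}\mu(V)\delta^{-\dim_B E}$ sets of diameter $\le\delta$, which by the first paragraph gives the corollary with $M_1=C_1(\sqrt d\,n_d^{2})^{\dim_B E}$.

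The only real work will be the bookkeeping in the third step: verifying that a depth-$(k+p)$ approximate box sits inside a single $k$-th basic pillar and that these boxes cover all of $E$, and calibrating $p$ so that simultaneously $p\ge1$ and $n_d^{-(k+p)}\asymp\delta$; the quantitative core is already packaged in Lemma~\ref{lem:box}.
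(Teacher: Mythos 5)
Your proposal is correct and follows essentially the same route as the paper: both bound $h_V(\delta)$ by counting the $(k+p)$-th approximate boxes inside the $k$-th basic pillars of $V$ via Lemma \ref{lem:box}, with $n_d^{-(k+p)}\asymp\delta$. The only difference is bookkeeping: you calibrate $p$ so each box has diameter at most $\delta$ (hence meets one $\delta$-component), while the paper takes $\delta=n_d^{-(k+p)}$ and uses that each box meets a bounded number of components; your version also spells out the covering and treats all $\delta\in(0,n_d^{-k})$ explicitly.
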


\begin{proof}
Let $p\geq 1$ and $\delta=n_d^{-k-p}$. By Lemma \ref{lem:box},
 the number of $(k+p)$-th
approximate boxes contained in the union of the corresponding $k$-th basic pillars of $V$ is comparable to $\mu(V)n_d^{(k+p)\dim_B E}$.
Since every $(k+p)$-th approximate  box can intersect a bounded number of $\delta$-connected component of $E$,
we obtain the lemma.
\end{proof}

Similar as Section 3, a connected component $C$ of $E_k$ (resp. $\widetilde{E}_k$) is called a \emph{$k$-th island of $E_k$} (resp. $\widetilde{E}_k$) if $C\cap\partial[0,1]^d=\emptyset$. It is easy to see
that $E_k$ has islands if and only if $\widetilde{E}_k$ has islands.
Recall that  $E$ is said to be degenerated if $E$ is contained in a face of $[0,1]^d$ with dimension $d-1$.
Zhang and Xu \cite[Theorem 4.1]{ZX21}  proved that

\begin{prop}\label{has-island2}
Let $E$ be a non-degenerated  self-affine Sierpi$\acute{\text{n}}$ski sponge.
 Then $E$ has trivial points if and only if there is an integer $q\ge 1$ such that $E_q$ has islands.
\end{prop}

%

Let $Q_k\in \widetilde{E}_k$,
we call $Q_k$ a \emph{$k$-th boundary approximate box} of $E$ if $Q_k\cap \partial[0,1]^d\neq \emptyset.$

Let $W$ be a $k$-th cylinder of $E$. Write $W=f(E)$, then $f([0,1]^d)$ is the corresponding $k$-th
basic pillar of $W$.
A $\delta$-connected component $U$ of $W$ is called an
\emph{inner $\delta$-connected component}, if
 $$
 \dist\left(U, \partial(f([0,1]^d))\right)>\delta,
 $$
otherwise, we call $U$ a \emph{boundary $\delta$-connected component}.
We denote by $h^b_W(\delta)$ the number of boundary $\delta$-connected components of $W$,
and by $h^i_W(\delta)$ the number of inner $\delta$-connected components of $W$.


\begin{lem}\label{lem:key}
Let $E$ be a non-degenerated self-affine Sierpi$\acute{\text{n}}$ski sponge.

\emph{(i)} Let $W$ be a $k$-th cylinder of $E$. Then
\begin{equation}\label{eq:Wb}
h^b_W(\delta)=o(\mu(W)\delta^{-\dim_B E}),\quad \delta\to 0.
\end{equation}

\emph{(ii)} If  $E$ satisfies the maximal power law,
then $E$ possesses trivial points; moreover, there exists an integer $p_0\geq 1$ and a constant $c'>0$  such that for any $k\geq 1$, any $k$-th cylinder $W$ of $E$
and any $\delta\leq n_d^{-(k+p_0)}$,
   $$h^i_W(\delta)\geq  c'h_W(\delta).$$
\end{lem}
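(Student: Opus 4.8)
The plan is to prove the two assertions of Lemma~\ref{lem:key} separately, the first being a counting estimate on boundary approximate boxes and the second a structural consequence of the power law combined with the island criterion (Proposition~\ref{has-island2}).

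For part~(i), I would first pass from $\delta$-connected components to approximate boxes: choose $p$ with $\delta\approx n_d^{-(k+p)}$, so that each boundary $\delta$-connected component of $W=f(E)$ must meet $f(\partial[0,1]^d)$ within distance $\delta$, hence it is covered by $(k+p)$-th boundary approximate boxes of $W$ (up to a bounded overlap factor). So it suffices to count $(k+p)$-th boundary approximate boxes inside $W$. Now restrict attention to one $(d-1)$-face of the pillar $f([0,1]^d)$, say the face where the first coordinate is extremal; the boundary approximate boxes touching that face are governed by the $j$-th major projections for $j\ge 2$ but are \emph{free} in the first coordinate in a restricted way — more precisely their count is comparable to $N^{-k}$ times a product over the ``active'' directions, and because $E$ is non-degenerated the exponent one gets is strictly smaller than $\dim_B E$ (one loses at least the first-coordinate contribution $\frac{1}{\log n_1}\log\frac{\#\pi_1(\bD)}{\#\pi_0(\bD)}$, or the analogous term for whichever face we are on). Summing the $2d$ faces and using Lemma~\ref{lem:box} to control the total gives a bound $C\mu(W)n_d^{(k+p)\beta}$ with $\beta<\dim_B E$, which is $o(\mu(W)\delta^{-\dim_B E})$ as $\delta\to0$ (equivalently $p\to\infty$). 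The care needed here is making the comparison ``number of boundary approximate boxes on a face $\asymp N^{-k}n_d^{(k+p)\beta}$'' precise using the structure \eqref{eq_approx_cube} and the dimension formula \eqref{boxdim}; this face-counting step is the main obstacle in part~(i), since the approximate boxes are not cubes and the exponents $\ell_j(k)$ interact with the projections.

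For part~(ii), suppose $E$ satisfies the maximal power law, i.e.\ $h_E(\delta)\asymp\delta^{-\dim_B E}$. Taking $W=E$ (a $0$-th cylinder), part~(i) gives $h^b_E(\delta)=o(\delta^{-\dim_B E})$, hence $h^i_E(\delta)\asymp\delta^{-\dim_B E}$ for small $\delta$; in particular $E_q$ must contain an island for $q$ large (an inner $\delta$-connected component that is far from $\partial[0,1]^d$ sits inside an island of $E_{q}$ for suitable $q$), so by Proposition~\ref{has-island2} $E$ has trivial points. For the uniform lower bound $h^i_W(\delta)\ge c'h_W(\delta)$, fix a $k$-th cylinder $W=f(E)$, which is a scaled affine copy of $E$ up to the anisotropy of $\Lambda$. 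Apply the maximal power law to $E$ itself at scale $\delta'$ corresponding to $\delta$ under $f$: $h_E(\delta')\asymp(\delta')^{-\dim_B E}$ while $h^b_E(\delta')=o((\delta')^{-\dim_B E})$ by (i), so there is $p_0$ and $c'>0$, independent of the scale, with $h^i_E(\delta')\ge c'h_E(\delta')$ for all $\delta'\le n_d^{-p_0}$. Pulling back through $f$ — noting that $f$ maps inner/boundary $\delta'$-components of $E$ to inner/boundary $\delta$-components of $W$ because $f$ is a bijective affine contraction carrying $\partial[0,1]^d$ to $\partial(f([0,1]^d))$, and that the distortion of $f$ is uniformly bounded so the threshold $n_d^{-(k+p_0)}$ works uniformly in $k$ — yields $h^i_W(\delta)\ge c'h_W(\delta)$ for $\delta\le n_d^{-(k+p_0)}$. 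The delicate point is that $f$ is not a similarity, so ``$\dist(\cdot,\partial)>\delta$'' is not exactly preserved; I would absorb this by enlarging $p_0$, since along the contracted directions distances only shrink and the ratio of the side lengths $n_j^{-k}$ of the pillar is bounded below by $n_d^{-k}$, so a fixed number of extra subdivision levels compensates for the anisotropy uniformly in $k$.
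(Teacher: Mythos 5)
Your part~(i) is workable in spirit: covering boundary $\delta$-components by near-boundary $(k+p)$-th approximate boxes and showing their count is a vanishing proportion of all approximate boxes in $W$ is essentially equivalent to what the paper does, except that the paper closes your self-declared ``main obstacle'' painlessly: a boundary $\delta$-component with $\delta=n_d^{-(k+p)}$ must meet the union $W_p^*$ of $(k+p)$-th cylinders whose pillars touch $\partial(f([0,1]^d))$; non-degeneracy gives $\mu(W_p^*)\le 2d\bigl(\tfrac{N-1}{N}\bigr)^p\mu(W)=o(\mu(W))$, and Corollary~\ref{cor:upper} then bounds $h_{W_p^*}(\delta)\le M_1\mu(W_p^*)\delta^{-\dim_B E}$. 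No face-by-face box counting is needed, and the bound is automatically uniform in $k$ and $W$, which matters later. The first half of your part~(ii) (maximal power law forces an inner component, hence an island, hence trivial points) is also fine modulo the routine passage through approximate boxes $\widetilde{E}_q$ rather than the pillars $E_q$.

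The genuine flaw is in your proof of the uniform estimate $h^i_W(\delta)\ge c'\,h_W(\delta)$. You propose to transport the estimate from $E$ to the cylinder $W=f(E)$ by ``pulling back through $f$,'' asserting that the distortion of $f$ is uniformly bounded and that a fixed enlargement of $p_0$ compensates for the anisotropy uniformly in $k$. This is false: $f=S_{\bomega}$ with $\bomega\in\bD^k$ has linear part $\Lambda^{-k}$, whose singular values are $n_1^{-k},\dots,n_d^{-k}$, so its eccentricity is $(n_d/n_1)^k\to\infty$ (recall $n_1<n_d$ is assumed throughout). Consequently there is no single scale $\delta'$ for which $f$ carries $\delta'$-components of $E$ to $\delta$-components of $W$; the best one gets is the sandwich $h_E(n_d^{k}\delta)\le h_W(\delta)\le h_E(n_1^{k}\delta)$, whose two scales differ by the unbounded factor $(n_d/n_1)^k$, and the resulting lower bound $h_W(\delta)\gtrsim \delta^{-\dim_B E}n_d^{-k\dim_B E}$ is strictly weaker than the needed $M_2\mu(W)\delta^{-\dim_B E}=M_2N^{-k}\delta^{-\dim_B E}$ because $\dim_B E\ge\log_{n_d}N$. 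No fixed number of extra subdivision levels repairs this, since the loss grows exponentially in $k$. The missing idea is to avoid rescaling altogether: all $k$-th cylinders are translates of $\Lambda^{-k}(E)$, so $h_W(\delta)$ is the same for each of the $N^k$ cylinders, and subadditivity $h_{A\cup B}(\delta)\le h_A(\delta)+h_B(\delta)$ gives directly
\begin{equation*}
h_W(\delta)\ \ge\ N^{-k}h_E(\delta)\ \ge\ M_2\,\mu(W)\,\delta^{-\dim_B E}.
\end{equation*}
Combining this with part~(i) applied to $W$ itself (whose $o(\cdot)$ is uniform in $k$ and $W$ by self-affinity, giving a single $p_0$ with $h^b_W(\delta)\le \tfrac{M_2}{2}\mu(W)\delta^{-\dim_B E}$ for $\delta\le n_d^{-(k+p_0)}$) and with the upper bound $h_W(\delta)\le M_1\mu(W)\delta^{-\dim_B E}$ of Corollary~\ref{cor:upper} yields $h^i_W(\delta)=h_W(\delta)-h^b_W(\delta)\ge c'h_W(\delta)$ with $c'=M_2/(2M_1)$. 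As written, your argument for the ``moreover'' clause does not go through.
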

\begin{proof} (i)
Write $W=f(E)$. Let $W_p^*$ be the union of $(k+p)$-th cylinders of $W$ whose corresponding basic pillars
 intersecting the boundary of $f([0,1]^d)$. Since $E$ is non-degenerated, we have
\begin{equation}\label{eq:Wp}
\mu(W_p^*) =o(\mu(W)), \quad p\to \infty.
\end{equation}
 Set $\delta=n_d^{-(k+p)}$. If $U$ is a boundary $\delta$-connected component of $W$,
 then $U\cap W_p^*\neq \emptyset$.
Therefore,
 $$
 h^b_{W}(\delta)\leq h_{W_p^*}(\delta) \leq M_1\mu(W_p^*)
 \delta^{-\dim_B E},
 $$
where the last inequality is due to Corollary \ref{cor:upper}.
  This together with \eqref{eq:Wp} imply \eqref{eq:Wb}.

(ii)
The assumption that $E$ satisfies the maximal power law implies that there exists $M_2>0$ such that
 \begin{equation}\label{eq:V-2}
 h_E(\delta)\geq  M_2\cdot \delta^{-\dim_B E},\quad \text{for any}~~ \delta\in(0,1).
 \end{equation}
 If $E$ does not possess trivial points, then for all $k\geq 1$,
 $E_k$ does not contain any $k$-th island by Proposition \ref{has-island2},
 furthermore, $\widetilde{E}_k$ does not contain any $k$-th island.
 Thus each $\delta$-connected component of $E$ contains points of $E\cap \partial([0,1]^d)$,
 so $h_E(\delta)=h^b_E(\delta)$.
 On the other hand, by (i) we have $h^b_E(\delta)=o(\delta^{-\dim_B E})$ as $\delta\rightarrow 0$,
 which contradicts \eqref{eq:V-2}. This proves that $E$ possesses trivial points.


 Let $W$ be a $k$-th cylinder of $E$.
 Since $h_{A\cup B}(\delta)\leq h_A(\delta)+h_B(\delta)$,  by \eqref{eq:V-2} we have
\begin{equation}\label{eq:V333}
 h_W(\delta)\geq N^{-k}h_E(\delta)\geq M_2\mu(W) \delta^{-\dim_B E}, \quad \text{for any}~\delta\in (0,1).
\end{equation}
Take $\varepsilon\leq M_2/2$. By \eqref{eq:Wb}, there exists an integer $p_0\geq 1$ such that
 $$
 h^b_W(\delta)\leq \varepsilon \mu(W)\delta^{-\dim_B E} \text{ for } \delta\leq n_d^{-(k+p_0)}.
 $$
   This together with \eqref{eq:V333} and \eqref{eq:V-1} imply that
   for $\delta\leq n_d^{-(k+p_0)}$,
 $$
 h^i_{W}(\delta)=h_W(\delta)-h^b_W(\delta)\geq \frac{M_2}{2} \mu(W)\delta^{-\dim_B E}\geq c'h_{W}(\delta),
 $$
 where $c'=M_2/(2M_1)$ and $M_1$ is the constant in Corollary \ref{cor:upper}.
 The lemma is proved.
\end{proof}


%
%

\medskip
\begin{proof}[\textbf{Proof of Theorem \ref{thm:law}}]
We will prove this theorem by induction on $d$. Notice that $E$ is a $1$-dimensional fractal cube if $d=1$,
 and in this case the theorem holds by Theorem \ref{thm:cube}.
Assume that the theorem holds for all $d'$-dimensional self-affine Sierpi$\acute{\text{n}}$ski sponge with $d'\le d-1$.

Now we consider the $d$-dimensional self-affine Sierpi$\acute{\text{n}}$ski sponge $E$.
Denote $G:=\pi_{d-1}(E)$.  First,  $G$ is non-degenerated since $E$ is.  Secondly, by \eqref{boxdim} we have
\begin{equation}\label{eq:dimG}
\log_{n_d} \frac{N}{\#\pi_{d-1}(\bD)}+\dim_B G=\dim_B E.
\end{equation}

``$\Leftarrow$": Suppose $E$ and all $\pi_j(E)~(1\leq j \leq d-1)$ possess trivial points,
then $G=\pi_{d-1}(E)$ satisfies the maximal power law by induction hypothesis.
We will show that $E$ satisfies the maximal power law.

Firstly,
by Corollary \ref{cor:upper},
\begin{equation}\label{uppbound1}
h_E(\delta)\leq M_1\delta^{-\dim_B E} \quad \text{ for all }\delta\in (0,1).
\end{equation}
Now we consider the lower bound of $h_E(\delta)$.

Since $E$ possesses trivial points, by Proposition \ref{has-island2}, there exists an integer $q_0\geq 1$ such that
$E_{q_0}$ has a $q_0$-th island, which we denote by $I$.
Let $p_0$ be the constant in Lemma \ref{lem:key} (ii).
Let $k\geq q_0$  and $\delta=n_d^{-(k+p_0)}$.
Since $G$ satisfies the maximal power law, there exists  a constant $c>0$ such that
\begin{equation}\label{uppbound_4}
c^{-1}\delta^{-\text{dim}_B G} \leq h_G(\delta)\leq c\delta^{-\text{dim}_B G}.
\end{equation}

It is easy to see that
$S_{\boldsymbol{\tau}}(I)$ is a $k$-th island of $E$ for any $\boldsymbol{\tau}\in \SD^{k-q_0}$.
So $E_k$ has $N^{k-q_0}$ number of $k$-th islands like $I':=S_\bomega(I)$ for some $\bomega\in \SD^{k-q_0}$.
Since the distance of any two $k$-th islands of $E_k$ is at least $n_d^{-k}$,
we have
\begin{equation}\label{lowerbound_1}
h_E(\delta)\geq N^{k-q_0}\cdot h_{E\cap I'}(\delta).
\end{equation}
Let $W$ be any $k$-th cylinder of $E$ contained in $I'$. Since $\pi_{d-1}$ is contractive,
  we obtain
\begin{equation}\label{lowerbound_a}
  h_{E\cap I'}(\delta)\geq   h_{G\cap \pi_{d-1}(I')}( \delta)\geq c'  h_{ \pi_{d-1}(W)}(\delta),
\end{equation}
 where   the last inequality is due to Lemma \ref{lem:key}(ii) with the constant $c'$ depends on $G$.
  Furthermore, from $h_{A\cup B}(\delta)\leq h_A(\delta)+h_B(\delta)$ we infer that
\begin{equation}\label{lowerbound_2}
 (\#\pi_{d-1}(\bD))^k \cdot h_{\pi_{d-1}(W)}(\delta)\geq h_G(\delta).
 \end{equation}
By (\ref{uppbound_4}-\ref{lowerbound_2}), we have
\begin{eqnarray*}
h_E(\delta)&\geq& N^{k-q_0} \cdot \frac{c'h_G(\delta)}{(\#\pi_{d-1}(\bD))^k}
\geq c'c^{-1} N^{-q_0}\cdot
\left(\frac{ N}{\#\pi_{d-1}(\bD)}\right)^k\cdot \delta^{-\text{dim}_B G}\\
&=& c'c^{-1} N^{-q_0} \cdot \left(\frac{\#\pi_{d-1}(\bD)}{N}\right)^{p_0} \cdot \delta^{-\text{dim}_B E},
\end{eqnarray*}
where the last equality holds by \eqref{eq:dimG}.
This together with \eqref{uppbound1} imply that $E$  satisfies the maximal power law.

 ``$\Rightarrow$": Suppose $E$ satisfies the maximal power law.  Then  $E$  possesses  trivial points by Lemma \ref{lem:key} (ii).
So it is sufficient to show that $G=\pi_{d-1}(E)$ satisfies the maximal power law by induction hypothesis.

Suppose on the contrary this is false.
  Then
 given $\varepsilon>0$, there exists $\delta$ as small as we want, such that
 \begin{equation}\label{ineq-h_G}
 h_G(\delta)\leq\varepsilon \delta^{-\dim_B G}.
 \end{equation}

Let $W$ be a $k$-th cylinder of $E$,
then $V:=\pi_{d-1}(W)$ is a $k$-th cylinder of $G$.
Let $\mu'$ be the uniform Bernoulli measure on $G$.
By Lemma \ref{lem:key} (i), there exists an integer  $p_1\geq 1$  such that
$$
h^b_V(\eta)\leq \varepsilon \mu'(V) \eta^{-\dim_B G} \text{ for } \eta\leq n_{d-1}^{-(k+p_1)}.
$$
We choose  $\delta$ small and thus  $k$ large so that
$n^{-(k+1)}\leq \delta<n_d^{-k}<n_{d-1}^{-(k+p_1)}$, then
$$h^b_{V}(\delta)  \leq \frac{\varepsilon
 \delta^{-\dim_B G}}{(N')^{k}},$$
 where $N'=\#\pi_{d-1}(\SD)$.
On the other hand, by \eqref{ineq-h_G},
$$
(N')^k h^i_{V}(\delta)\leq h_G(\delta)\leq \varepsilon \delta^{-\dim_B G}.
$$
So we obtain
\begin{equation}\label{eq:h_V}
h_V(\delta)=h^b_{V}(\delta)+h^i_{V}(\delta)\leq
\frac{2\varepsilon  \delta^{-\dim_B G}}{(N')^{k}}.
\end{equation}

Now we estimate $h_W(\delta')$, where
 $\delta'=\sqrt{2}n_d^{-k}> \sqrt{\delta^2+(n_d^{-k})^2}$.
Since $W$ is contained in $\pi_{d-1}(W)\times [b, b+n_d^{-k}]$ for some $b\in [0,1]$, we deduce that
if  two points  $\pi_{d-1}(x)$ and $\pi_{d-1}(y)$  belong to a same $\delta$-connected component of $G$,
then $x$ and $y$ belong to a same $\delta'$-connected component of $E$.
  Therefore,
\begin{equation}\label{eq:W-2}
  h_W(\delta')\leq h_{V}(\delta),
\end{equation}
and consequently,
$$
h_{E}(\delta')  \leq N^k\cdot  h_W(\delta') \leq N^k\cdot \frac{2\varepsilon  \delta^{-\dim_B G}}{(N')^{k}}
 \leq  M'\varepsilon (\delta')^{-\dim_B E}
$$
for $M'=2(\sqrt{2})^{\dim_B E}n_d^{\dim_B G}$.
This is a contradiction since $E$ satisfies the maximal power law.
The theorem is proved.
%
%
\end{proof}

\section{\textbf{Proof of Theorem \ref{thm:perfect}}}


Before proving Theorem \ref{thm:perfect}, we prove a  finite type property of totally disconnected self-affine Sierpi$\acute{\text{n}}$ski sponge. The proof is similar to \cite{XX10} and \cite{MXX17}, which dealt with
 fractal cubes and Bedford-McMullen carpets, respectively.

\begin{thm}\label{finite-type}
Let $E$ be a totally disconnected self-affine Sierpi$\acute{\text{n}}$ski sponge, then  there is an integer $M_3>0$ such that
for every integer $k\geq 1$, each connected component of $E_k$ contains at most $M_3$ $k$-th basic pillars.
\end{thm}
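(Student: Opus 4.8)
The plan is to argue by contradiction, following the scheme of Xi--Xiong and Miao--Xi--Xiong but adapted to the anisotropic, higher-dimensional setting. Suppose no such bound $M_3$ exists. Then for every integer $m\ge 1$ there is a level $k_m$ and a connected component $C_m$ of $E_{k_m}$ containing at least $m$ basic pillars. The key device is a renormalization/compactness argument: a connected component of $E_k$ containing many pillars, after rescaling one of its pillars back to the unit cube by the appropriate affine map $S_{\bomega}^{-1}$, yields a ``large connected configuration'' inside some $E_{k-|\bomega|}$ that still touches two opposite faces (or has large diameter). First I would make precise a notion of ``type'' for a basic pillar $S_{\bomega}([0,1]^d)$ inside a component $C$ of $E_k$: roughly, which of the neighbouring pillars in $E_k$ are connected to it through $C$ within a bounded combinatorial distance. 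Because $\bD$ is finite and the local geometry of how translated copies $\prod_j[0,n_j^{-k}]$ can abut one another is governed only by $\bD$ (not by $k$), there are only finitely many such local types; this is where the finite-type phenomenon enters.

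Next I would exploit non-total-disconnectedness as the source of the contradiction. If components of $E_k$ can be arbitrarily large (contain arbitrarily many pillars), then by a pigeonhole/extraction argument one finds, at arbitrarily deep levels, a pillar whose rescaled copy contains a connected sub-configuration of $E_{k'}$ joining two opposite $(d-1)$-faces of $[0,1]^d$, for $k'$ as large as we like. Passing to a limit in the Hausdorff metric over $k'\to\infty$ (using compactness of the hyperspace of closed subsets of $[0,1]^d$), these finite approximations converge to a closed connected subset of $E$ joining two opposite faces; in particular $E$ has a nondegenerate connected component, contradicting total disconnectedness. Some care is needed because $E_{k'}\supsetneq E$, so I would instead track, along the chain of pillars realizing the connection, that a definite $\mu$-mass (or a definite number of sub-pillars at a fixed deeper level) survives, so that the limiting continuum genuinely lies in $E$ and is not an artifact of the fattening; the uniform finite-type structure is exactly what guarantees this mass does not leak away as $k'$ grows.

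Concretely, the steps in order are: (1) set up the contradiction hypothesis and the sequence $(k_m, C_m)$; (2) define the finitely many local connection types of pillars and record that a component with $\ge m$ pillars must, for $m$ large, contain a chain of pillars of unbounded combinatorial length all of the ``non-terminating'' type; (3) use the self-affine structure $S_{\bomega}$ to renormalize such a chain to level $0$, producing for each large $\ell$ a connected chain in some $E_{j_\ell}$ of diameter bounded below by a fixed constant (say joining two opposite faces), with $j_\ell\to\infty$; (4) take a Hausdorff limit of these chains and verify the limit is a connected subset of $E$ of positive diameter, contradicting total disconnectedness; (5) conclude the uniform bound $M_3$ exists. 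The main obstacle I anticipate is step (4), ensuring the limiting continuum actually lies in $E$ rather than merely in $\bigcap$ of fattened approximations — i.e.\ controlling the loss of pillars when renormalizing and passing to the limit. I expect to handle this by carrying along, at a fixed auxiliary depth $q$ below the current level, a uniformly positive count of surviving sub-pillars, which the finite-type classification makes possible; a secondary technical point is the anisotropy (the cube $\prod_j[0,n_j^{-k}]$ is not a genuine cube), but since all $n_j\ge 2$ are fixed, the distortion between $\prod_j[0,n_j^{-k}]$ and a genuine cube is bounded by a constant depending only on $\{n_j\}$, so it only affects constants.
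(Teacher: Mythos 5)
Your overall strategy (contradiction, renormalization by the affine maps, Hausdorff-limit compactness, and total disconnectedness of the limit) is the same as the paper's, but as written there is a genuine gap at your steps (3)--(4). When you rescale a large connected component by $S_{\bomega}^{-1}$ for one of its pillars $\bomega$, the connecting set does \emph{not} stay inside that pillar: the connection typically routes through neighbouring level-$k$ pillars, so the rescaled set is a connected subset of $\bigcup_{h\in\{-1,0,1\}^d}(E_{j}+h)$ (the paper's set $\Xi_{j}$), not of $E_{j}$, and the Hausdorff limit is a nondegenerate continuum in $\bigcup_{h\in\{-1,0,1\}^d}(E+h)$ rather than in $E$. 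To finish one then needs the non-obvious topological fact that a finite union of totally disconnected compact sets is totally disconnected (the paper cites Xi--Xiong for this). Your outline asserts instead that the limit is ``a closed connected subset of $E$ joining two opposite faces,'' which only follows if the connection stays within a single pillar --- false in general --- so this step would fail without the $\Xi_k$ device or an equivalent one. Relatedly, your step (5) needs to be made quantitative: the paper extracts a single level $k_0$ such that every connected component of $\Xi_{k_0}$ meeting $[0,1]^d$ lies in $(-1,2)^d$, and then traps any component of $E_{k+k_0}$ meeting $S_{\bomega}([0,1]^d)$ inside $S_{\bomega}((-1,2)^d)$, yielding the explicit bound $M_3=3^dN^{k_0}$; your ``local connection types'' classification does no comparable work and is not needed.

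On the other hand, the obstacle you flag as the main difficulty --- that the limiting continuum might live only in the fattened approximations and ``mass might leak away'' --- is actually a non-issue, and your proposed remedy (tracking surviving sub-pillar counts at an auxiliary depth) is unnecessary. Since $E_{k+1}\subset E_k$ and $\bigcap_k E_k=E$, any Hausdorff limit of connected sets $X_{k_i}\subset \Xi_{k_i}$ with $k_i\to\infty$ is automatically contained in $\bigcup_{h\in\{-1,0,1\}^d}(E+h)$; no mass-survival argument is required. So the proposal misidentifies where the real difficulty lies (the neighbouring translates and the finite-union-of-totally-disconnected-sets fact) while spending effort on a step that nestedness makes trivial. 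With the $\Xi_{k_0}$ construction and the trapping argument inserted, your plan becomes essentially the paper's proof.
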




Denote $d_H(A,B)$ the \emph{Hausdorff metric} between two subsets $A$ and $B$ of $\mathbb{R}^d$.
The following lemma is obvious, see for instance \cite{XX10}.

\begin{lem}\label{pre_lem_1}
Suppose $\{X_k\}_{k\ge 1}$ is a collection of connected compact subsets of $[0,1]^d$.
Then there exist a subsequence $\{k_i\}_{i\ge 1}$ and a connected compact set $X$
such that $X_{k_i}\stackrel{d_H}\longrightarrow X$ as $i\rightarrow \infty$.
\end{lem}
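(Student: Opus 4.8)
The statement packages two independent facts: a compactness (selection) assertion and a stability-of-connectedness assertion. The plan is to handle them in turn.

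First I would extract a convergent subsequence. Since $[0,1]^d$ is compact, the hyperspace $\mathcal{K}$ of its non-empty compact subsets, equipped with $d_H$, is itself compact (the Blaschke selection theorem); consequently the sequence $\{X_k\}_{k\ge 1}\subset \mathcal{K}$ has a subsequence $\{X_{k_i}\}$ converging in $d_H$ to some non-empty compact set $X\subset [0,1]^d$. If one prefers a self-contained argument, the standard route is through total boundedness: for each $m\ge 1$ fix a finite cover of $[0,1]^d$ by balls of radius $2^{-m}$, record for each $X_k$ the sub-collection of these balls that it meets, and use a diagonal/pigeonhole argument to pass to a subsequence along which, at every scale $m$, this record eventually stabilizes; one checks this makes the subsequence Cauchy in $d_H$, and completeness of $(\mathcal{K}, d_H)$ supplies the limit $X$.

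The substance is to show the limit $X$ is connected, and I would argue by contradiction. Suppose $X=A\cup B$ with $A,B$ non-empty, compact, and disjoint. Then $\varepsilon:=\dist(A,B)>0$. Let $U=\{z:\dist(z,A)<\varepsilon/3\}$ and $V=\{z:\dist(z,B)<\varepsilon/3\}$; these are open, contain $A$ and $B$ respectively, and are disjoint, since a common point $z$ would give $a\in A$, $b\in B$ with $|a-b|\le |a-z|+|z-b|<2\varepsilon/3<\varepsilon$, contradicting the definition of $\varepsilon$. Choose $i$ so large that $d_H(X_{k_i},X)<\varepsilon/3$. On the one hand every point of $X_{k_i}$ lies within $\varepsilon/3$ of $X=A\cup B$, so $X_{k_i}\subset U\cup V$; on the other hand every point of $X$, in particular some point of $A$ and some point of $B$, lies within $\varepsilon/3$ of $X_{k_i}$, whence $X_{k_i}\cap U\ne\emptyset$ and $X_{k_i}\cap V\ne\emptyset$. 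Thus $U,V$ separate $X_{k_i}$ into two non-empty relatively open pieces, contradicting the connectedness of $X_{k_i}$. Hence $X$ is connected, which completes the proof.

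The selection step is routine compactness and poses no real difficulty beyond bookkeeping. The only point demanding care — and the part I regard as the crux — is the separation argument: one must invoke both directions of the Hausdorff metric simultaneously (that $X_{k_i}$ is contained in a small neighborhood of $X$, and that $X$ is contained in a small neighborhood of $X_{k_i}$) in order to guarantee at once that $X_{k_i}$ is trapped inside $U\cup V$ and that it genuinely meets each of $U$ and $V$. With those two inclusions in hand, the contradiction with connectedness is immediate, and this explains why the lemma is correctly described as obvious.
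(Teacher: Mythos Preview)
Your proof is correct. The paper itself omits the argument entirely, calling the lemma ``obvious'' and referring the reader to \cite{XX10}; your write-up supplies precisely the standard details one would expect---Blaschke selection for the subsequence, and the $\varepsilon/3$-separation argument for preservation of connectedness under Hausdorff limits---so there is nothing to compare.
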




\begin{proof}[\textbf{Proof of Theorem \ref{finite-type}}]
Let $E$ be a totally disconnected self-affine Sierpi$\acute{\text{n}}$ski sponge.
We set
$$
\Xi_k=\underset{h\in\{-1,0,1\}^d}{\bigcup} \left({E_k}+h\right).
$$
First, we claim that  there exists an
integer $k_0$ such that for any connected component $X$ of $\Xi_{k_0}$
with $X\cap[0,1]^d\neq\emptyset$, $X$ is contained in $(-1,2)^d$.

Suppose on the contrary that for any $k$ there are connected components $X_k\subset \Xi_k$ and points $x_k\in[0,1]^d\cap X_k$ and $y_k\in\partial[-1,2]^d\cap X_k$.
By Lemma \ref{pre_lem_1}, we can take a subsequence $\{k_i\}_i$ such that $x_{k_i}\rightarrow x^*\in[0,1]^d,
y_{k_i}\rightarrow y^*\in \partial[-1,2]^d$ and $X_{k_i}\stackrel{d_H}\longrightarrow X$ for some connected compact set $X$ with
$$X\subset \underset{h\in\{-1,0,1\}^d}\bigcup (E+h), \ x^*\in X\cap [0,1]^d, \text{ and } y^*\in X\cap\partial[-1,2]^d.$$
Clearly $\underset{h\in\{-1,0,1\}^d}\bigcup (E+h)$ is totally disconnected
 since it is a finite union
of totally disconnected compact sets. (A carefully proof of this fact can be found in  \cite{XX10}).
This  contradiction proves our claim.

For any $k\ge 1$, let $U$ be a connected component of $E_{k+k_0}$ such that $U\cap S_\bomega([0,1]^d)\ne\emptyset$ for some $\bomega\in\bD^k$. Notice that $S_\bomega([-1,2]^d)\cap E_{k+k_0}\subset S_\bomega(\Xi_{k_0})$,
then $[-1,2]^d\cap S_\bomega^{-1}(U)\subset \Xi_{k_0}$.
By the claim above, every connected component of $[-1,2]^d\cap S_\bomega^{-1}(U)$ which intersects $[0,1]^d$ is contained in $(-1,2)^d$.
This implies that
$$S_\bomega^{-1}(U)  \subset (-1,2)^d$$
since $U$ is connected.
It follows that $U\subset S_\bomega((-1,2)^d)$, so $U$ contains at most $3^dN^{k_0}$ $(k+k_0)$-th basic pillars
of $E$. Thus $E$ is of finite type.
\end{proof}

Recall that $\pi_j(x_1,\dots,x_d)=(x_1,\dots, x_j)$ for $1\leq j \leq d-1$.
Denote $\hat \pi_d(x_1,\dots, x_d)=x_d$. Let $z_1,\dots, z_p\in E$.
For $0< \delta<1$, we call $z_1,\dots, z_p$ a \emph{$\delta$-chain} of $E$ if $|z_{i+1}-z_i|\leq \delta$ for $1\leq i \leq p-1$,
and define the \emph{size} of the chain as
$$
L=\frac{\text{diam} \{z_1,\dots, z_p\}}{\delta}.
$$

\begin{proof}[\textbf{Proof of Theorem \ref{thm:perfect}}]
We will prove the theorem by induction on $d$.
If $d=1$, $E$ is a $1$-dimensional fractal cube, and the theorem holds by Remark \ref{perfectly_prop}.

\medskip

 \emph{Totally disconnected $\Rightarrow$ perfectly disconnected:}
  Suppose that $E$ and $\pi_j(E)$, $j=1,\dots, d-1$,
   are totally disconnected. By induction hypothesis, $\pi_{d-1}(E)$
   is perfectly disconnected.
   We will show that $E$ is perfectly disconnected.

Take $\delta\in(0,1)$.  Let $U$ be a $\delta$-connected component of $E$. Let $k$ be the
 integer such
   that $n_d^{-(k+1)}\leq \delta <n_d^{-k}$. Then $U$ is contained in a connected component $V$ of $E_k$.
   Let $M_3$ be a constant such that Theorem \ref{finite-type} holds for $E$ and $\pi_{d-1}(E)$ simultaneously,
   then $V$ contains at most $M_3$ $k$-th basic pillars.
   Denote $L=\text{diam}(U)/\delta$.

   Let $z_1,\dots, z_p$ be a $\delta$-chain in $U$ such that $|z_1-z_p|\geq L\delta/2$.
    Then $\pi_{d-1}(z_1),\dots, \pi_{d-1}(z_p)$ is also a $\delta$-chain
  in $\pi_{d-1}(E)$.  Since
    $|\hat \pi_d(x)-\hat\pi_d(y)|\leq M_3 n_d^{-k}$ for any $x,y\in V$,
 we have
    $$|\pi_{d-1}(z_1)-\pi_{d-1}(z_p)|\geq \sqrt{L^2\delta^2/4-M_3^2n_d^{-2k}}\geq
   n_d^{-k} \sqrt{\frac{L^2}{4n_d^2}-M_3^2} ,$$
   so
   $$
   L'=\frac{|\pi_{d-1}(z_1)-\pi_{d-1}(z_p)|}{\delta}> \sqrt{\frac{L^2}{4n_d^2}-M_3^2}.
   $$

  If $E$ is not perfectly disconnected, then we can choose $U$ such that $L$ is arbitrarily large,
 so $\diam(\pi_{d-1}(U))/\delta\geq L'$ can  be arbitrarily large,
 which contradicts
 the fact that $\pi_{d-1}(E)$ is perfectly disconnected.

 \medskip

  \emph{Perfectly disconnected $\Rightarrow$ totally disconnected:}
  Assume that $E$ is perfectly disconnected.
  Clearly $E$ is totally disconnected. Hence, by induction hypothesis,
  we only need to show that  $G:=\pi_{d-1}(E)$ is perfectly disconnected.

   Firstly, we claim that $G$ must be  totally disconnected.

    Suppose on the contrary that the claim is false.
    Let $\Gamma$ be a connected subset of $(0,1)^{d-1}\cap G$. Fix two points $a, b\in \Gamma$.
    Take any $\bomega\in (\pi_{d-1}(\SD))^k$, then $S'_\bomega(\Gamma)$ is contained
    in the interior of the $k$-th basic pillar $V=S'_\bomega([0,1]^{d-1})$,
    where $\{S'_\bi\}_{\bi\in \pi_{d-1}(\SD)}$ is the IFS of $G$.
    Denote $a'=S'_\bomega(a)$, $b'=S'_\bomega(b)$.
    Then $|b'-a'|\geq |b-a|/n_{d-1}^k$.

    Let $\delta=n_{d}^{-k}$. Let $z_1=a', \dots, z_p=b'$ be a $\delta$-chain in $S'_\bomega(\Gamma)\subset V$. Let $W$ be a $k$-th cylinder of $E$ such that $\pi_{d-1}(W)=V\cap G$.
    Let $y_1,\dots, y_p$ be a sequence in $W$ such that $\pi_{d-1}(y_j)=z_j,~1\leq j \leq p$, then it is a $(\sqrt{2}\delta)$-chain.
    Since
    $$
    L=\frac{|y_1-y_p|}{\sqrt{2}\delta}\geq \frac{|z_1-z_p|}{\sqrt{2}\delta}
    \geq \frac{|b-a|}{\sqrt{2}} \cdot\left(\frac{n_d}{n_{d-1}}\right)^k
    $$
    can be arbitrarily large when $k$ tends to $\infty$, we conclude that $E$ is not perfectly disconnected,
    which is a contradiction. The claim is proved.

 Secondly, suppose on the contrary that $G$ is not perfectly disconnected.

Take $\delta\in(0,1)$.  Let $U$ be a $\delta$-connected component of $G$.  Denote $L=\text{diam}(U)/\delta$. Let $k$ be the
 integer such
   that $n_d^{-(k+1)}\leq \delta <n_d^{-k}$.
    Then $U$ is contained in a connected component $V$ of $G_k$.
   Since $G$ is totally disconnected, by Theorem \ref{finite-type},
   $V$ contains at most $M_3$ $k$-th  basic pillars of $G$,
   and we denote the corresponding $k$-th cylinders by $V_1,\dots, V_h$ where $h\leq M_3$.

Let $z_1,\dots, z_p$ be a $\delta$-chain of $U$ such that $|z_1-z_p|=L\delta$.
   Without loss of generality, we may assume that the diameter of $\{z_1,\dots, z_p\}\cap V_j$ attains
   the maximality when $j=1$.
Let $\{x_j\}_{j=1}^\ell$ be the subsequence of $\{z_j\}_{j=1}^p$ located in $V_1$, then
    $|x_1-x_\ell|\geq L\delta/h$.
    Let $\Delta=\underset{1\le j\le \ell-1}{\max}|x_j-x_{j+1}|$,
   then  $x_1,\dots, x_\ell$ is a $\Delta$-chain in $V_1$.

   Let $W$ be a $k$-th cylinder of $E$ such that $\pi_{d-1}(W)=V_1$.
   Then the pre-image of $x_j$ in $W$, which we denote by $y_1,\dots ,y_\ell$,
   is a $\sqrt{\Delta^2+(n_d\delta)^2}$-chain. The size of this chain is
   $$
   \widetilde{L}\geq \frac{|y_1-y_\ell|}{\sqrt{\Delta^2+(n_d\delta)^2}}\geq
   \frac{|x_1-x_\ell|}{\sqrt{\Delta^2+(n_d\delta)^2}}\geq
   \frac{L\delta}{h\sqrt{\Delta^2+(n_d\delta)^2}}.
   $$

    \textit{Case 1.} $\Delta\leq \sqrt{L}\delta$.

   In this case we have $\widetilde{L}\geq \frac{L}{M_3\sqrt{L+n_d^2}}$. Since $G$ is not perfectly disconnected,
     $\widetilde{L}$ can be arbitrarily large when $L\to \infty$, we deduce that $E$ is not perfectly disconnected, which is a contradiction.

     \textit{Case 2.}  $\Delta> \sqrt{L}\delta$.

     Let $1\leq j^*\leq \ell-1$ be the index such that $|x_{j^*+1}-x_{j^*}|=\Delta$.
  Denote   the sub-chain of $z_1,\dots,z_p$ between $x_{j^*}$ and $x_{j^*+1}$ by
     $$z_1'=z_{m+1},\dots, z'_s=z_{m+s}.$$
     Then $(z'_j)_{j=1}^s$ belong to $V_2\cup \cdots \cup V_h$ and  $|z'_1-z'_s|\geq (\sqrt{L}-2)\delta$.

  Now we  repeat the above argument by considering the $\delta$-chain
  $(z'_j)_{j=1}^s$ in $V_2\cup \cdots \cup V_h$.
 In at most $M_3-1$ steps, we will obtain a $\Delta'$-chain in $E$ with arbitrarily large size when $L\to \infty$.
This finishes the proof of Case 2 and the theorem is proved.
\end{proof}

%


\end{document}